\title{Instanton L-spaces and splicing}
\author{John A. Baldwin}
\email{john.baldwin@bc.edu}
\address{Department of Mathematics\\Boston College}
\author{Steven Sivek}
\email{s.sivek@imperial.ac.uk}
\address{Department of Mathematics\\Imperial College London}
\thanks{JAB was supported by  NSF FRG Grant DMS-1952707 and NSF CAREER Grant DMS-1454865.}
\newtheorem*{rep@theorem}{\rep@title}
\newcommand{\newreptheorem}[2]{%
\newenvironment{rep#1}[1]{%
 \def\rep@title{#2 \ref{##1}}%
 \begin{rep@theorem}}%
 {\end{rep@theorem}}}
\newtheorem {theorem}{Theorem}
\newtheorem {lemma}[theorem]{Lemma}
\newtheorem {proposition}[theorem]{Proposition}
\newtheorem {corollary}[theorem]{Corollary}
\newtheorem {conjecture}[theorem]{Conjecture}
\numberwithin{equation}{section}
\numberwithin{theorem}{section}
\theoremstyle{definition}
\newtheorem{remark}[theorem]{Remark}
\newtheorem*{remark*}{Remark}
\newlist{pcases}{enumerate}{1}
\setlist[pcases]{
  label=\bf{Case~\arabic*:}\protect\thiscase.~,
  ref=\arabic*,
  align=left,
  labelsep=0pt,
  leftmargin=0pt,
  labelwidth=0pt,
  parsep=0pt
}
\newcommand{\case}[1][]{%
  \if\relax\detokenize{#1}\relax
    \def\thiscase{}%
  \else
    \def\thiscase{~#1}%
  \fi
  \item
}
\newcommand{\Z}{\mathbb{Z}}
\newcommand{\C}{\mathbb{C}}
\newcommand{\F}{\mathbb{F}}
\newcommand{\Q}{\mathbb{Q}}
\newcommand\hfk{\mathit{HFK}}
\newcommand\hfkhat{\widehat{\hfk}}
\newcommand\SU{\mathit{SU}}
\newcommand\SL{\mathit{SL}}
\newcommand\SHI{\mathit{SHI}}
\newcommand\KHI{\mathit{KHI}}
\newcommand\Is{I^\#}
\DeclareFontFamily{U}{mathx}{\hyphenchar\font45}
\DeclareFontShape{U}{mathx}{m}{n}{
      <5> <6> <7> <8> <9> <10>
      <10.95> <12> <14.4> <17.28> <20.74> <24.88>
      mathx10
      }{}
\DeclareSymbolFont{mathx}{U}{mathx}{m}{n}
\DeclareMathAccent{\widecheck}{0}{mathx}{"71}
\newcommand{\hfhat}{\widehat{\mathit{HF}}}
\newcommand{\id}{\operatorname{id}}
\newcommand{\coker}{\operatorname{coker}}
\newcommand{\pt}{\mathrm{pt}}
\newcommand{\im}{\operatorname{Im}}
\tikzset{every picture/.style=thick}
\tikzset{link/.style = { white, double = black, line width = 1.75pt, double distance = 1.25pt, looseness=1.75 }}
\tikzset{crossing/.style = {draw, circle, dotted, minimum size=0.5cm, inner sep=0, outer sep=0}}
\pgfplotsset{compat=1.12}
\begin{document}

\begin{abstract}
We prove that the 3-manifold obtained by gluing the complements of  two nontrivial knots in   homology 3-sphere instanton $L$-spaces, by a map which  identifies meridians with Seifert longitudes,  cannot be an instanton $L$-space. This recovers the recent theorem of Lidman--Pinz{\'o}n-Caicedo--Zentner that the fundamental group of every closed, oriented, toroidal 3-manifold admits a nontrivial  $\mathit{SU}(2)$-representation, and consequently Zentner's earlier  result that the fundamental group of every closed, oriented $3$-manifold besides the 3-sphere admits a nontrivial $\mathit{SL}(2,\C)$-representation. \end{abstract}

\maketitle

\section{Introduction}\label{sec:intro}
A  productive way to study 3-manifolds is to study homomorphisms from their fundamental groups to simple non-abelian groups like $SU(2)$. This strategy was used by Kronheimer and Mrowka in their celebrated proof of the Property P Conjecture  \cite{km-p}. More precisely, they proved that if $K\subset S^3$ is a nontrivial knot, then there is a nontrivial homomorphism \[\pi_1(S^3_1(K))\to SU(2),\] certifying that the surgered manifold is not a homotopy sphere. It is natural to  conjecture more generally  that \emph{every} homology 3-sphere $Y\not\cong S^3$ admits a nontrivial homomorphism \[\pi_1(Y)\to SU(2).\] This is a generalization of the Poincar{\'e} Conjecture, and is listed as \cite[Problem 3.105(A)]{kirby-list} on Kirby's  famous problem list.  Lidman, Pinz{\'o}n-Caicedo, and Zentner recently proved this conjecture for toroidal homology 3-spheres \cite{lpcz}, recovering in the process Zentner's beautiful result \cite{zentner} that the conjecture holds with $SL(2,\C)$ in place of $SU(2)$. 

In this article, we take inspiration from  Heegaard Floer homology to prove a result in the  instanton Floer setting, which we then use to give different and simpler proofs of the main results of \cite{lpcz,zentner}, as described below. 

An \emph{instanton $L$-space} is a rational homology $3$-sphere $Y$ whose framed instanton homology \cite{km-yaft,scaduto} has the smallest dimension possible over $\Q$:
\[\dim \Is(Y) = |H_1(Y;\Z)|.\] The topological significance of this notion is that if $Y$ is an \emph{integer} homology 3-sphere, then $Y$ is \emph{not} an instanton $L$-space if and only if the reduced instanton homology $\hat{I}(Y)$ is nonzero  \cite[Theorem~1.3]{scaduto}, in which case there exists a nontrivial homomorphism  \[\pi_1(Y)\to\SU(2).\] (See also \cite[Theorem 4.2]{bs-stein}.) Our main theorem is the following:

\begin{theorem}
\label{thm:main}
 Let $C$ be the splice of nontrivial knots $\alpha\subset A$ and $\beta\subset B$ in homology sphere instanton $L$-spaces, formed by gluing their   complements  by a map that identifies meridians with Seifert longitudes. Then \[\dim \Is(C) \geq 1+4\cdot\dim\KHI(A,\alpha,g(\alpha))\cdot\dim\KHI(B,\beta,g(\beta)) \geq 5.\]
In particular, the homology sphere $C$ is  not an instanton $L$-space.
\end{theorem}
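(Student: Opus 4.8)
The plan is to extract the dimension $\dim\KHI(A,\alpha,g(\alpha))$ from the $A$-side of the splice and $\dim\KHI(B,\beta,g(\beta))$ from the $B$-side via a K\"unneth-type inequality, to amplify each factor using genus detection for knot instanton homology, and then to gain the final summand $1$ from a parity constraint. Write $M_A = A\ssm\nu\alpha$ and $M_B = B\ssm\nu\beta$, so that $C = M_A\cup_\phi M_B$ glued along $T = \partial M_A = \partial M_B$, where the splice identification $\phi$ matches the meridian $\mu_\alpha$ with the Seifert longitude $\lambda_\beta$ and the Seifert longitude $\lambda_\alpha$ with the meridian $\mu_\beta$. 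Give $M_A$ the pair of meridional sutures $\gamma_{\mu_\alpha}$, so that $\SHI(M_A,\gamma_{\mu_\alpha}) = \KHI(A,\alpha)$ by definition; since $\phi(\mu_\alpha)=\lambda_\beta$, the sutures on $M_B$ compatible with the gluing are the pair $\gamma_{\lambda_\beta}$ of Seifert-longitudinal curves, and $\SHI(M_B,\gamma_{\lambda_\beta})$ is governed by the $0$-surgery $B_0(\beta)$.

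The heart of the argument is a gluing estimate of the shape
\[ \dim\Is(C) \;\ge\; \dim\SHI(M_A,\gamma_{\mu_\alpha})\cdot\dim\SHI(M_B,\gamma_{\lambda_\beta}). \]
One route to this is through the gluing and self-gluing maps for sutured instanton homology: decompose $T$ into two annuli, glue $M_A$ to $M_B$ along the first and then self-glue along the second, and argue that the resulting map out of $\SHI(M_A,\gamma_{\mu_\alpha})\otimes\SHI(M_B,\gamma_{\lambda_\beta})$ is surjective onto $\Is(C)$, or at least has controlled cokernel. A more hands-on route is through surgery exact triangles: pushing $\mu_\alpha\subset T$ slightly off of $T$ produces a knot $\kappa\subset C$ whose surgery with the framing induced by $T$ attaches a $2$-handle to $M_A$ along $\mu_\alpha$ and to $M_B$ along $\lambda_\beta$, both with their surface framings, and hence yields $A\#B_0(\beta)$; the connected-sum formula together with $\dim\Is(A)=1$ gives $\dim\Is(A\#B_0(\beta))=\dim\Is(B_0(\beta))$, and one then iterates, using the gradings induced by the Seifert surfaces on the two sides to upgrade the additive consequences of the triangle to the displayed product. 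I expect this gluing estimate to be the main obstacle: there is no bordered instanton theory with which to box-tensor the two sides as in the Heegaard Floer proof of the analogous statement, so the multiplicativity of the two contributions must be produced by hand, and showing that they do not interfere is the delicate point.

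Granting the estimate, the remainder is bookkeeping. Since $\alpha$ is nontrivial, $g(\alpha)\ge 1$, so the top and bottom Alexander gradings are distinct; combined with the symmetry $\dim\KHI(A,\alpha,i)=\dim\KHI(A,\alpha,-i)$ this gives $\dim\SHI(M_A,\gamma_{\mu_\alpha})=\dim\KHI(A,\alpha)\ge 2\dim\KHI(A,\alpha,g(\alpha))$. Symmetrically $\dim\SHI(M_B,\gamma_{\lambda_\beta})\ge 2\dim\KHI(B,\beta,g(\beta))$, where now the instanton analogue of the Ozsv\'ath--Szab\'o large-surface formula for $0$-surgeries is used to see that the extreme Alexander-graded summands survive into $\Is(B_0(\beta))$. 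Hence
\[ \dim\Is(C)\;\ge\; 4\cdot\dim\KHI(A,\alpha,g(\alpha))\cdot\dim\KHI(B,\beta,g(\beta)). \]
Finally $C$ is an integer homology sphere, so $\chi(\Is(C)) = |H_1(C;\Z)| = 1$ and $\dim\Is(C)$ is odd; since the right-hand side above is even, the inequality sharpens to $\dim\Is(C)\ge 1+4\cdot\dim\KHI(A,\alpha,g(\alpha))\cdot\dim\KHI(B,\beta,g(\beta))$. Genus detection for knot instanton homology makes each factor at least $1$, so $\dim\Is(C)\ge 5 > 1 = |H_1(C;\Z)|$, and in particular $C$ is not an instanton $L$-space.
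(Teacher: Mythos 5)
Your outline correctly identifies the bookkeeping at the end (parity of $\dim \Is(C)$ from $\chi(\Is(C))=1$ giving the extra $+1$, and genus detection giving each factor $\geq 1$) — these steps do appear in the paper. But the core of your argument, the multiplicative gluing estimate
\[
\dim\Is(C) \;\ge\; \dim\SHI(M_A,\gamma_{\mu_\alpha})\cdot\dim\SHI(M_B,\gamma_{\lambda_\beta}),
\]
is exactly what you do not prove, and as you yourself concede, no instanton analogue of bordered Floer theory or of a general sutured gluing inequality is available to produce it. Neither of your suggested routes closes this gap: surjectivity (or controlled cokernel) of a gluing/self-gluing map onto $\Is(C)$ is not a known statement, and surgery exact triangles by themselves only yield additive rank relations, so ``iterating'' them cannot by itself manufacture a product of ranks of the two sutured pieces. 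A further warning sign is that your proposed estimate makes no essential use of the hypothesis that $A$ and $B$ are instanton $L$-spaces, whereas in the paper that hypothesis is load-bearing: it is what makes $\dim\Is(A_i(\alpha))$ change by exactly $\pm1$ under the triangle, which feeds into the invariant $\nu^\sharp$ and the injectivity statements (Proposition 3.2 / Lemma 3.4) on which the whole argument rests.

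The paper's actual route is different in structure. The quantitative input is Theorem 3.2 (proved in the appendix by excision and sutured techniques): for the \emph{other} splice $S$, in which the Seifert longitudes are identified, the extremal $\mu(\Sigma)$-eigenspace satisfies $\dim\Is(S,\lambda)_{\Sigma} = 4\cdot\dim\KHI(A,\alpha,g(\alpha))\cdot\dim\KHI(B,\beta,g(\beta))$, where $\Sigma$ is the closed surface formed from the two Seifert surfaces. The splice $C$ you are asked about is then related to $S$ and to $A_{-1}\#B_{-1}$ by surgery on a curve $\gamma$ in the splicing torus, and the theorem is proved by showing, via commutative diagrams of surgery exact triangles, that the image of $\Is(A_{-1}\#B_{-1})\to\Is(S)$ misses $\Is(S)_{\Sigma}$, so this $4$-dimensional subspace survives into $\coker$ and hence bounds $\dim\Is(C)$ from below. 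The key mechanisms are (i) cobordism maps induced by $0$-surgery on a pushoff $\hat\alpha$ kill $\Is(S)_\Sigma$ because they compress $\Sigma$, (ii) the $\nu^\sharp$ case analysis supplies injectivity of the relevant connected-sum cobordism maps, and (iii) in the harder case $\nu^\sharp(\alpha)<0$, $\nu^\sharp(\beta)>0$, a composite cobordism map is shown to vanish because it contains an embedded $2$-sphere of self-intersection $0$ on which $w_2$ of the $SO(3)$-bundle is nontrivial. None of these ingredients appears in your proposal, and without a substitute for them (or a proof of your gluing estimate, which would be a substantial new theorem in its own right) the argument does not go through.
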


Here, $\KHI$ refers to a version of Floer's \emph{instanton knot homology}, defined in \cite[\S7.6]{km-excision}. This theory was a key tool, for instance, in Kronheimer and Mrowka's proof that Khovanov homology detects the unknot \cite{km-unknot}, and is conjectured to be isomorphic to a version of Heegaard knot Floer homology (as mentioned below).


A Heegaard Floer analogue of Theorem \ref{thm:main}, with coefficients in $\Z/2\Z$ rather than $\Q$, was first established by Hedden and Levine in \cite{hedden-levine}, and later generalized by Eftekhary \cite{eaman,eaman2} and then by Hanselman, Rasmussen, and Watson \cite{hrw}. Their approaches all used bordered Heegaard Floer homology or something similar. By contrast, our proof of Theorem \ref{thm:main} is largely ``theory-independent," meaning that it can be readily adapted to give alternative proofs of the corresponding results in Heegaard Floer and monopole Floer homology as well.  Indeed, the only tools it requires to prove these results over a field $\F$ are a surgery exact triangle with $\F$ coefficients, a suitable excision theorem, and an adjunction inequality.  We remark that the Heegaard Floer version over a field of characteristic zero would also follow from Theorem~\ref{thm:main} together with the conjectural isomorphisms
\[ I^\#(Y;\Q) \cong \hfhat(Y;\Q) \quad\text{and}\quad \KHI(Y,K,g(K);\Q) \cong \hfkhat(Y,K,g(K);\Q), \]
which are special cases of \cite[Conjecture~7.24]{km-excision}.

The advantage of proving a result like Theorem \ref{thm:main} in the instanton Floer setting has to do, of course, with the connection between framed instanton homology and the fundamental group, highlighted above. In particular, Theorem \ref{thm:main} has as a corollary the following result of Lidman, Pinz{\'o}n-Caicedo, and Zentner, alluded to at the beginning. Their proof used a different version of  instanton Floer homology, together with arguments involving exact triangles, cabling, the pillowcase, and holonomy perturbations. 


\begin{corollary}[\cite{lpcz}]
\label{cor:toroidal}
There exists a nontrivial homomorphism $\pi_1(Y)\to SU(2)$ for every closed, oriented, toroidal 3-manifold $Y$.
\end{corollary}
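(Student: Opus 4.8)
The plan is to derive Corollary~\ref{cor:toroidal} from Theorem~\ref{thm:main} together with the characterization of instanton $L$-spaces among integer homology spheres recalled above (\cite[Theorem~1.3]{scaduto}: such a manifold is \emph{not} an instanton $L$-space if and only if $\hat{I}(Y)\neq0$, in which case $\pi_1(Y)$ has a nontrivial $SU(2)$ representation). We may assume $Y$ is an integer homology sphere: if $H_1(Y;\Z)\neq0$, then the abelianization $\pi_1(Y)\to H_1(Y;\Z)$ surjects onto a nontrivial cyclic group, which embeds into $U(1)\subset SU(2)$, and the composite is a nontrivial representation. So suppose henceforth that $Y$ is a toroidal integer homology sphere; by the cited result it suffices to prove that $Y$ is not an instanton $L$-space.

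The first step is to realize $Y$ as a splice. Choose an incompressible torus $T\subset Y$; since $H_2(Y;\Z)=0$ it separates, say $Y=Y_1\cup_T Y_2$. A Mayer--Vietoris computation using $H_\ast(Y)\cong H_\ast(S^3)$ shows that $H_1(T;\Z)\to H_1(Y_1;\Z)\oplus H_1(Y_2;\Z)$ is an isomorphism, and Poincar\'e--Lefschetz duality (the half-lives-half-dies lemma) then forces $H_1(Y_i;\Z)\cong\Z$ with $H_1(T;\Z)\to H_1(Y_i;\Z)$ surjective; let $\lambda_i\subset T$ be the slope generating its kernel. One checks that $\{\lambda_1,\lambda_2\}$ is a basis of $H_1(T;\Z)$ and that $\lambda_2$ maps to a generator of $H_1(Y_1;\Z)$ while $\lambda_1$ maps to a generator of $H_1(Y_2;\Z)$. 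Hence the Dehn fillings $Z_1:=Y_1(\lambda_2)$ and $Z_2:=Y_2(\lambda_1)$ are integer homology spheres, $Y_i$ is the complement of the core $K_i$ of the filling solid torus in $Z_i$, the meridian of $K_i$ is $\lambda_{3-i}$, and the Seifert longitude of $K_i$ is $\lambda_i$. Therefore the original gluing along $T$ identifies the meridian of each $K_i$ with the Seifert longitude of $K_{3-i}$, so $Y$ is the splice of $(Z_1,K_1)$ and $(Z_2,K_2)$; moreover each $K_i$ is nontrivial, since $Y_i$ is not a solid torus ($T$ being incompressible in $Y$).

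Next I would establish an elementary extension principle: \emph{if $\pi_1(Z_i)$ admits a nontrivial homomorphism to $SU(2)$ for some $i$, then so does $\pi_1(Y)$.} Pull such a homomorphism back along the surjection $\pi_1(Y_i)\twoheadrightarrow\pi_1(Z_i)$ (which kills the meridian of $K_i$) to obtain a nontrivial $\rho_i\colon\pi_1(Y_i)\to SU(2)$ sending the meridian of $K_i$ to $1$; put $a:=\rho_i(\lambda_i)$ and fix a circle subgroup $U(1)\subset SU(2)$ containing $a$. On the other piece let $\rho_{3-i}\colon\pi_1(Y_{3-i})\to H_1(Y_{3-i};\Z)\cong\Z\to U(1)\subset SU(2)$ be the abelian representation sending the meridian of $K_{3-i}$ to $a$. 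Since the splice identifies the meridian of $K_{3-i}$ with $\lambda_i$ and identifies $\lambda_{3-i}$ with the meridian of $K_i$, the homomorphisms $\rho_i$ and $\rho_{3-i}$ agree on $\pi_1(T)$; by van Kampen's theorem they glue to $\rho\colon\pi_1(Y)\to SU(2)$, which is nontrivial because it restricts to $\rho_i$ on $\pi_1(Y_i)$.

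Finally, argue by contradiction. Suppose $\pi_1(Y)$ has no nontrivial $SU(2)$ representation. By the extension principle neither $\pi_1(Z_1)$ nor $\pi_1(Z_2)$ does, and since the $Z_i$ are integer homology spheres, \cite[Theorem~1.3]{scaduto} forces each $Z_i$ to be an instanton $L$-space. Then Theorem~\ref{thm:main} applies to the splice $Y$, giving $\dim\Is(Y)\geq5>1=|H_1(Y;\Z)|$, so $Y$ is not an instanton $L$-space, and \cite[Theorem~1.3]{scaduto} then supplies a nontrivial homomorphism $\pi_1(Y)\to SU(2)$, contradicting our assumption. The real content is entirely in Theorem~\ref{thm:main}: once one has it, the case in which both splice pieces happen to be instanton $L$-spaces is handled by that theorem, while the case in which one of them is not causes no difficulty, since a nontrivial representation can then be built by hand via the extension principle. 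The only mildly delicate point in the reduction is extracting the splice decomposition from an arbitrary incompressible torus, which I expect to be a routine consequence of Poincar\'e--Lefschetz duality.
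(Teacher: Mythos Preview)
Your proposal is correct and follows essentially the same approach as the paper: reduce to integer homology spheres via abelian representations, realize the toroidal homology sphere as a splice of nontrivial knots in homology spheres, assume both summands are instanton $L$-spaces (else pull back a representation), and then invoke Theorem~\ref{thm:main}. The only notable difference is that where you construct the ``extension principle'' by gluing a pulled-back representation to a matching abelian one on the other piece, the paper simply observes that there is a $\pi_1$-surjective (degree-one) map $Y\to Z_i$, which packages the same construction in one line; your argument is equally valid, just more explicit.
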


Corollary \ref{cor:toroidal}, in turn, recovers the result of Zentner below, whose original proof involved extensive analysis of the pillowcase, holonomy perturbations, and degree-1 maps.

\begin{corollary}[\cite{zentner}]\label{cor:SL2}
There exists a nontrivial homomorphism $\pi_1(Y)\to SL(2,\C)$ for every closed, oriented 3-manifold  $Y\not\cong S^3$.
\end{corollary}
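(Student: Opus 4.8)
The plan is to derive Corollary~\ref{cor:SL2} from Corollary~\ref{cor:toroidal} by a case analysis on the geometric/JSJ structure of $Y$, reducing in each case either to the toroidal situation or to a previously known result about $SU(2)$- or $SL(2,\mathbb{C})$-representations. Since a nontrivial $SU(2)$-representation composes with the inclusion $SU(2)\hookrightarrow SL(2,\mathbb{C})$ to give a nontrivial $SL(2,\mathbb{C})$-representation, Corollary~\ref{cor:toroidal} immediately handles every toroidal $Y$. So the work is entirely in the \emph{atoroidal} case, where I would invoke geometrization.

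First I would dispose of the non-prime case: if $Y\cong Y_1\#Y_2$ with neither summand $S^3$, then $\pi_1(Y)\cong\pi_1(Y_1)*\pi_1(Y_2)$ surjects onto $\pi_1(Y_1)$, which is a nontrivial group, so it suffices to find a nontrivial $SL(2,\mathbb{C})$-representation of $\pi_1(Y_1)$; thus we may assume $Y$ is prime, and (being closed oriented) irreducible or $S^1\times S^2$. The case $Y=S^1\times S^2$ is trivial since $\pi_1=\mathbb{Z}$. So assume $Y$ is closed, oriented, irreducible, and atoroidal; by geometrization $Y$ carries one of the eight geometries, and atoroidality forces it to be spherical or hyperbolic (the other six geometries are Seifert-fibered or Sol and always yield nontrivial abelian or otherwise reducible quotients, but more to the point are either toroidal or have large fundamental group). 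In the hyperbolic case, $\pi_1(Y)$ is a lattice in $PSL(2,\mathbb{C})$, and the holonomy lifts (or at least some representation) gives a nontrivial $SL(2,\mathbb{C})$-representation; alternatively one cites that a discrete faithful representation into $PSL(2,\mathbb{C})$ always exists and pulls back nontrivially. In the spherical case $\pi_1(Y)$ is a nontrivial finite subgroup of $SO(4)$ acting freely on $S^3$, and every such group is known to admit a nontrivial representation into $SU(2)\subset SL(2,\mathbb{C})$ — this is classical, going back to the classification of free actions on $S^3$ and the fact that finite subgroups of $SO(4)$ have fixed-point-free representations that are built from $SU(2)$ factors.

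I would carry out the steps in this order: (1) reduce to $Y$ prime via the free-product argument; (2) handle $S^1\times S^2$ trivially; (3) for $Y$ irreducible, split into toroidal (apply Corollary~\ref{cor:toroidal}) versus atoroidal; (4) for atoroidal $Y$, apply geometrization and handle the hyperbolic and spherical cases by the standard representation-theoretic inputs above. The main obstacle, I expect, is organizing the atoroidal case cleanly: one must be careful that "atoroidal" in the JSJ sense still permits small Seifert-fibered pieces, and in particular that prime atoroidal $3$-manifolds with infinite $\pi_1$ that are \emph{not} hyperbolic (the small Seifert-fibered spaces) are covered — here one uses that their fundamental groups surject onto triangle groups or have nontrivial abelianization, yielding representations into $SL(2,\mathbb{C})$ by elementary means. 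Assembling these classical facts is routine but requires precise bookkeeping against the statement of geometrization; none of it uses instanton homology, so strictly this corollary is a formal consequence of Corollary~\ref{cor:toroidal} together with standard $3$-manifold topology.
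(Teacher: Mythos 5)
Your overall route---prime decomposition, geometrization, Corollary~\ref{cor:toroidal} for the toroidal case, lifting the hyperbolic holonomy, and classical representation facts for the remaining geometric pieces---is the same skeleton as the paper's proof, but the paper organizes it so as to avoid the soft spots in your write-up. Its first move is to reduce to the case that $Y$ is an integer homology sphere: if $H_1(Y)\neq 0$ one already gets a nontrivial representation through the abelianization into $U(1)\subset \SU(2)\subset \SL(2,\C)$. After that reduction, a prime homology sphere is hyperbolic, Seifert fibered, or toroidal, and the three cases are handled by the Culler--Shalen lifting of the holonomy $\pi_1(Y)\to \mathit{PSL}(2,\C)$ to $\SL(2,\C)$, by Fintushel--Stern's $\SU(2)$-representations of Seifert fibered homology spheres, and by Corollary~\ref{cor:toroidal}, respectively. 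Because you skip the homology-sphere reduction, you are forced into the longer case analysis (spherical space forms, small Seifert fibered manifolds, $S^1\times S^2$), and it is exactly there that your argument gets thin.

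Concretely: the assertion that atoroidality forces $Y$ to be spherical or hyperbolic is false---atoroidal irreducible closed $3$-manifolds include the small Seifert fibered spaces, e.g.\ Brieskorn homology spheres with $\widetilde{\mathit{SL}}(2,\R)$ geometry---and while you acknowledge this at the end, your fix (``surject onto triangle groups \dots\ by elementary means'') elides the one genuinely nontrivial point in that case: a surjection onto a triangle group yields a representation into $\mathit{PSL}(2,\C)$, not $\SL(2,\C)$, and one must still lift. The lift exists because any such $Y$ with trivial abelianization is a homology sphere, so the obstruction in $H^2(Y;\Z/2\Z)$ vanishes---but that observation is precisely the homology-sphere reduction you omitted (alternatively one quotes Fintushel--Stern, as the paper does, to get an $\SU(2)$-representation directly). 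The same lifting issue appears in your hyperbolic case, where ``pulls back nontrivially'' is not an argument; the paper handles it by citing \cite[Proposition~3.1.1]{cs-splittings}. Your spherical case is fine (a finite group acting freely on $S^3$ is either non-perfect, so has a nontrivial $U(1)$-representation, or is the binary icosahedral group inside $\SU(2)$). So the proposal is workable and in spirit the same as the paper's, but to be complete you should either restore the reduction to homology spheres at the outset or supply the $\mathit{PSL}(2,\C)\to\SL(2,\C)$ lifting statements explicitly in the hyperbolic and small Seifert fibered cases.
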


We remark that  Hanselman, Rasmussen, and Watson proved \cite{hrw} something  more general than Theorem \ref{thm:main} in the Heegaard Floer setting. In particular, they showed that the Heegaard Floer homology of \emph{any} toroidal manifold (not necessarily coming from a splice of knots in homology sphere $L$-spaces) has dimension at least five (over $\Z/2\Z$). We conjecture that the analogous statement holds  for framed instanton homology:

\begin{conjecture}\label{conj:toroidal}
If $Y$ is a closed, oriented, toroidal rational homology 3-sphere, then $\dim\Is(Y) \geq 5$.
\end{conjecture}

Moreover,  toroidal manifolds which achieve the bound of five in Heegaard Floer homology are  classified in  \cite{hrw}---they are certain splices of trefoils. It would be  interesting  to try and prove that framed instanton homology is 5-dimensional as well for these trefoil splices.

If Conjecture \ref{conj:toroidal} holds, then a toroidal rational homology sphere $Y$ with $|H_1(Y)|< 5$ is not an instanton L-space. This would imply the following, by \cite[Corollary 4.10]{bs-stein}:

\begin{conjecture}
\label{conj:toroidalqhs}
There exists an irreducible homomorphism $\pi_1(Y)\to SU(2)$ for every toroidal rational homology 3-sphere $Y$ with $|H_1(Y)|<5$.
\end{conjecture} 

Combined with the classification of $SU(2)$-abelian Seifert fibered 3-manifolds in  \cite[Theorem 1.1]{sz-menagerie}, and the fact that  fundamental groups of hyperbolic 3-manifolds always admit irreducible $SL(2,\C)$-representations, a proof of Conjecture \ref{conj:toroidalqhs} would  be enough  to    classify rational homology 3-spheres $Y$ with $|H_1(Y)|< 5$ which admit irreducible homomorphisms \[\pi_1(Y)\to SL(2,\C),\]  extending Zentner's result for homology 3-spheres  stated in Corollary \ref{cor:SL2}.

We prove Theorem \ref{thm:main} and its two  corollaries in \S\ref{sec:proof}, after a review of framed instanton homology in \S\ref{sec:background}. The proof of Theorem \ref{thm:main} relies on Theorem \ref{thm:nonzero}, which we prove in  \S\ref{sec:appendix} and  may be of independent interest.

We thank Josh Greene, Matt Hedden, Adam Levine, and Tye Lidman for helpful conversations in the course of this work, and to the anonymous referees, whose feedback helped improve this paper. Thanks also go to Tye Lidman, Juanita Pinz{\'o}n-Caicedo, and Raphael Zentner for their lovely paper \cite{lpcz}, which inspired this project.

\section{Background} \label{sec:background} The following is a  brief review of   framed instanton homology, limited to the background needed to follow the flow of our arguments.  Instanton  homology can be defined over $\Z$, but  we will work with coefficients in  $\Q$ throughout. For more details, see \cite[\S7]{km-excision}. 

Let $Y$ be a closed, oriented 3-manifold, and $\lambda\subset Y$ a multicurve in $Y$ which is \emph{admissible}, meaning that it intersects some closed surface of positive genus in an odd number of points. One can associate to this data an \emph{admissible} $\mathit{SO}(3)$-bundle over $Y$ with second Stiefel--Whitney class Poincar\'e dual to $\lambda$,
and then define a corresponding instanton Floer homology group $I_*(Y)_\lambda$. 

Suppose $\Sigma\subset Y$ is a closed, oriented surface of  genus $g>0$, and $\pt\in Y$ is a point. These submanifolds induce commuting linear operators $\mu(\Sigma),\mu(\pt)$ on $I_*(Y)_\lambda$ whose simultaneous eigenvalues are contained in the set 
\[\{(i^r\cdot 2k, (-1)^r\cdot 2)\},\] for $0\leq r \leq 3$ and $0\leq k\leq g{-}1$. We let \[I_*(Y|\Sigma)_\lambda\subset I_*(Y)_\lambda\] denote the (generalized) simultaneous $(2g{-}2, 2)$-eigenspace of these operators. These groups depend only on the homology classes of $\Sigma$ and $\lambda$ in $H_2(Y;\Z)$ and $ H_1(Y;\Z/2\Z)$, respectively. 

Now let $Y$ be any closed, oriented $3$-manifold, and $\lambda\subset Y$ \emph{any} multicurve. The framed instanton homology of $(Y,\lambda)$ is defined by \[\Is(Y,\lambda):=I_*(Y\#T^3)_{\lambda \cup \lambda_T},\] where $\lambda_T$ is a circle fiber in $T^3$ (note that $\lambda \cup \lambda_T$ is automatically admissible in $Y\# T^3$). We will use the notation $\Is(Y)$ or $\Is(Y,0)$ for $\Is(Y,\lambda)$ when $\lambda$ is  nullhomologous mod 2.
Given a closed, oriented surface $\Sigma\subset Y$ of genus $g>0$, we  define \[\Is(Y,\lambda|\Sigma) := I_*(Y\#T^3|\Sigma)_{\lambda \cup \lambda_T}.\] By \cite[Lemma 2.5]{bs-trefoil}, we have the symmetry \begin{equation}\label{eqn:symm}\Is(Y,\lambda|\Sigma) \cong \Is(Y,\lambda|{-}\Sigma) \end{equation} relating the generalized $(2g{-}2)$ and $(2{-}2g)$-eigenspaces of $\mu(\Sigma)$ acting on $\Is(Y,\lambda)$.\footnote{Indeed, $\Is(Y,\lambda|{-}\Sigma)$ is the $(2{-}2g,2)$-eigenspace of $\mu(\Sigma),\mu(\pt)$ since $\mu(-\Sigma) = -\mu(\Sigma)$.} When $g\geq 2$, these \emph{extremal} eigenspaces are disjoint, and we define \[\Is(Y,\lambda)_\Sigma: = \Is(Y,\lambda|\Sigma)\oplus \Is(Y,\lambda|{-}\Sigma).\]

Floer's surgery exact triangle \cite{floer-surgery} associates to a framed knot $K \subset Y$ an exact triangle
\[ \cdots \to I_*(Y)_\lambda \to I_*(Y_0(K))_{\lambda\cup\mu} \to I_*(Y_1(K))_\lambda \to \cdots, \]
where $\mu$ is the image in $Y_0(K)$ of a meridian of $K$, provided that all three multicurves are admissible. As discussed in \cite[\S7.5]{scaduto}, this gives rise to a surgery exact triangle in framed instanton homology as well, which takes the form \[ \cdots \to I^\#(Y,\lambda) \to I^\#(Y_0(K),\lambda\cup \mu) \to I^\#(Y_1(K),\lambda) \to \cdots \] for any multicurves $\lambda$ and $\mu$. We can always arrange, via a judicious choice of $\lambda$, that these multicurves are nullhomologous mod 2 in their respective manifolds, as in \cite[\S2.2]{bs-concordance}, and so obtain an exact triangle of the form \[ \cdots \to I^\#(Y) \to I^\#(Y_0(K)) \to I^\#(Y_1(K)) \to \cdots \]
The maps in  these exact triangles are induced by the associated trace cobordisms, equipped with $\mathit{SO}(3)$-bundles whose second Stiefel-Whitney classes are Poincar{\'e} dual to certain surface cobordisms between the corresponding multicurves; see \cite[\S2.3]{abds}.

\section{The proofs}\label{sec:proof}

Before proving Theorem \ref{thm:main}, we introduce some  notation and preliminary results. One of these is the theorem below, whose proof we defer to \S\ref{sec:appendix}.


\begin{theorem}\label{thm:nonzero}
Let $S$ be a splice of nontrivial knots $\alpha\subset A$ and $\beta\subset B$ in  homology spheres, formed by gluing their   complements  by a map that identifies  the Seifert longitudes. Then \[\dim \Is(S,\lambda)_\Sigma = 4\cdot\dim\KHI(A,\alpha,g(\alpha))\cdot\dim\KHI(B,\beta,g(\beta))\geq 4\]  for all $\lambda$, where $\Sigma\subset S$ is the union of minimal genus Seifert surfaces for $\alpha$ and $\beta$.
\end{theorem}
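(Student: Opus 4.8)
The plan is to compute $\dim \Is(S,\lambda)_\Sigma$ by realizing $S$ as a surgery on a suitable link and exploiting the surgery exact triangles together with a Künneth-type principle for the splice decomposition. Concretely, write $S = M_\alpha \cup_\phi M_\beta$, where $M_\alpha = A \ssm N(\alpha)$, $M_\beta = B \ssm N(\beta)$, and $\phi$ identifies the meridian $\mu_\alpha$ with the Seifert longitude $\lambda_\beta$ and $\lambda_\alpha$ with $\mu_\beta$. The key observation is that $S$ sits in a surgery triangle obtained by performing Dehn filling along one of the gluing tori: filling $\mu_\alpha = \lambda_\beta$ in $M_\beta$ and a nearby slope gives the two ``ends'' of the triangle, one of which recovers $A$ (with $\beta$ capped off) and the other a connected-sum--like manifold, and similarly on the $\alpha$ side. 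I would set this up so that the relevant surface $\Sigma$ — the union of minimal-genus Seifert surfaces for $\alpha$ and $\beta$, which becomes closed in $S$ since the longitude of each knot is glued to the meridian of the other — is carried along by the trace cobordisms, so that we can track the $\Sigma$-eigenspace summand $\Is(-,\lambda)_\Sigma$ through every map.

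The main computational engine will be the identification of the $\Sigma$-eigenspace with instanton knot homology in top Alexander grading. Recall that for a knot $K \subset Y$ in a homology sphere with Seifert surface $F$ of genus $g$, one has $\KHI(Y,K,g) \cong \Is(Y_0(K), \mu \mid \hat F)$ (the extremal eigenspace of $\mu(\hat F)$, where $\hat F$ is the closed-up Seifert surface in the $0$-surgery), by the work relating sutured instanton homology to framed instanton homology of $0$-surgeries. Since $\Sigma = \hat F_\alpha \cup \hat F_\beta$ decomposes as a disjoint (well, geometrically intersecting but homologically split) union, and since the $\mu(\Sigma)$-action on $\Is(S,\lambda)$ should split as $\mu(\hat F_\alpha) + \mu(\hat F_\beta)$ under the splice decomposition, I expect the extremal eigenspace $\Is(S,\lambda)_\Sigma$ to factor as a tensor product of the two extremal eigenspaces, each of which is $\KHI(A,\alpha,g(\alpha))$ and $\KHI(B,\beta,g(\beta))$ respectively — with an extra factor of $4 = 2 \cdot 2$ coming from the two-dimensional ambiguity in each of the two ``doubling'' directions (i.e. the $I_*(T^3 \mid T^2)$-type contribution associated to each gluing torus, exactly analogous to how $\Is(Y|\Sigma)$ for $Y$ a circle bundle contributes factors of $2$). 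Establishing this tensor-product decomposition rigorously is where I expect the real work to lie: one needs either a gluing/excision argument à la Kronheimer--Mrowka along the splicing torus, or a careful bookkeeping of the two nested surgery exact triangles showing that all connecting maps vanish on the $\Sigma$-eigenspace for dimension/grading reasons, forcing the long exact sequences to collapse into short exact (indeed split) sequences.

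More precisely, the step-by-step plan is: (1) Fix minimal-genus Seifert surfaces $F_\alpha, F_\beta$ and note that in $S$ their boundaries are capped by meridian disks on the opposite side, so $\Sigma = F_\alpha \cup F_\beta$ is a closed genus-$(g(\alpha)+g(\beta))$ surface with $[\Sigma] \ne 0$. (2) Present $S$ via two surgery triangles along the two cores of the gluing solid-torus-neighborhoods, expressing $\Is(S,\lambda)$ in terms of $\Is$ of simpler pieces — crucially, manifolds in which at least one of $\alpha,\beta$ has been surgered to a Floer-simple situation. (3) Use the eigenvalue structure of $\mu(\Sigma), \mu(\pt)$ to restrict to the $(\pm(2g-2),2)$-eigenspaces throughout, and show that the connecting homomorphisms in the triangles, being induced by cobordisms that do not contain $\Sigma$ as an essential piece, shift eigenvalues and hence vanish on $\Is(-,\lambda)_\Sigma$. (4) Conclude that $\Is(S,\lambda)_\Sigma$ is built as a tensor product of the analogous eigenspaces for $A$-side and $B$-side, each identified with $\KHI$ in top grading via the $0$-surgery formula, picking up a factor of $2$ per gluing torus. (5) Finally, since $\alpha$ and $\beta$ are nontrivial, $\KHI(A,\alpha,g(\alpha))$ and $\KHI(B,\beta,g(\beta))$ are each nonzero (indeed of dimension $\ge 1$, with $\ge 2$ if the knot is not fibered, but $\ge 1$ always suffices here), giving the bound $\ge 4$. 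The hardest part, as indicated, is step (3)--(4): making the Künneth decomposition along the splicing torus precise, which is presumably what Theorem~\ref{thm:nonzero} is set aside to \S\ref{sec:appendix} to handle via a dedicated excision argument.
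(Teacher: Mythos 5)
Your setup already misdescribes the splice in question: in Theorem \ref{thm:nonzero} the gluing identifies the two Seifert longitudes with \emph{each other} (this is the splice $S = C_1(\gamma)$, not the meridian-to-longitude splice $C$ of Theorem \ref{thm:main}), and $\Sigma$ is closed because $\partial\Sigma_\alpha$ and $\partial\Sigma_\beta$ are glued together along the common longitude --- there are no ``meridian disks on the opposite side'' in either knot complement, and there are no closed surfaces $\hat F_\alpha,\hat F_\beta$ in $S$ individually. Consequently the proposed splitting $\mu(\Sigma)=\mu(\hat F_\alpha)+\mu(\hat F_\beta)$ and the $0$-surgery identification $\KHI(Y,K,g)\cong \Is(Y_0(K),\mu\,|\,\hat F)$ have nothing to attach to inside $S$, and the plan of extracting a tensor-product decomposition from ``nested surgery triangles whose connecting maps vanish on the $\Sigma$-eigenspace'' is unsubstantiated: exact triangles alone give extensions and dimension inequalities, not a K\"unneth splitting, and the vanishing you invoke is not a matter of eigenvalue shifts (in the paper such vanishing comes only from compressing $\Sigma$ in a $2$-handle cobordism, which is not what your cobordisms do). The one step you correctly identify as the crux --- an excision argument --- is exactly the content you leave undone, and even its shape is off: Kronheimer--Mrowka excision is not applied along the (separating) splicing torus; the paper cuts $S$ along two parallel copies of the genus $\geq 2$ surface $\Sigma$ itself and reassembles the pieces into closures of the sutured Seifert-surface complements, identifying the result with $\SHI(A(\Sigma_\alpha)(1))\otimes\SHI(B(\Sigma_\beta))$, which is $\KHI(A,\alpha,g(\alpha))\otimes\KHI(B,\beta,g(\beta))$ up to the puncture factor.

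Two further gaps: the statement is ``for all $\lambda$,'' and bundle-independence is a genuine issue in the instanton setting --- the paper spends a chain of isomorphisms (a nullhomologous dual curve $\delta$, the cut-and-reglue invariance of Corollary \ref{cor:cut}, and an excision cobordism with $(T^3,d\cup\lambda)$ using $I_*(T^3|T^2)\cong\Q$) just to prove $\Is(S,0|\Sigma)\cong\Is(S,\mu|\Sigma)$, which your proposal never addresses. Your accounting of the factor $4$ is also wrong: it does not come from ``$I_*(T^3|T^2)$-type contributions at two gluing tori'' (there is one splicing torus, and $I_*(T^3|T^2)$ is one-dimensional); it is $2\times 2$, one factor from $\Is(S,\lambda)_\Sigma=\Is(S,\lambda|\Sigma)\oplus\Is(S,\lambda|{-}\Sigma)$ together with the symmetry \eqref{eqn:symm}, and one from $\dim\SHI(M(1))=2\dim\SHI(M)$ (Corollary \ref{cor:hole}). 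Finally, the bound $\dim\KHI(A,\alpha,g(\alpha))\geq 1$ is not automatic for knots in general homology spheres: the sutured Seifert-surface complement can fail to be taut when it is non-prime, and the paper needs the connected-sum argument of Corollary \ref{cor:nonzero} (using $\chi(\Is(Y))=1$) to handle this; you assert it as known.
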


As remarked in the introduction, $\KHI(A,\alpha)$ (and similarly $\KHI(B,\beta)$) refers to the instanton knot homology studied by Kronheimer and Mrowka \cite[\S7.6]{km-excision}, defined as the sutured instanton homology of the complement $A\setminus N(\alpha)$ with sutures a pair of oppositely oriented meridians.  It admits an ``Alexander decomposition'' into summands indexed by integers $i$ with $|i| \leq g(\alpha)$; the top summand $\KHI(A,\alpha,g(\alpha))$ is nonzero, and it arises as the sutured instanton homology of the complement of a genus-$g(\alpha)$ Seifert surface.

The strategy behind our proof of Theorem \ref{thm:main} will involve relating the splice $C$ to a splice $S$ of the form described in the theorem above, via surgery. We will then use surgery exact triangles to show that the extremal subspace $\Is(S,\lambda)_\Sigma$ ``persists" and contributes at least four to the dimension of $\Is(C)$.



Suppose $\alpha\subset A$ is a knot in a homology sphere instanton $L$-space. Since $\dim \Is(A)=1$, the surgery exact triangle \[\cdots \to \Is(A) \xrightarrow{} \Is(A_{i}(\alpha)) \xrightarrow{} \Is(A_{i+1}(\alpha)) \to \cdots\] implies that \[\dim \Is(A_{i+1}(\alpha)) = \dim\Is(A_{i}(\alpha)) \pm 1\] for each integer $i$. In \cite[\S3]{bs-concordance} (see also \cite[Remark~6.3]{bs-concordance2}),
we proved that the sequence \[\{\dim \Is(A_i(\alpha))\}_{i\in\Z}\] either has one local minimum; or else it has two local minima at $i=\pm1$, satisfying \[\dim \Is(A_{-1}(\alpha))  = \dim \Is(A_1(\alpha)).\footnote{We  proved this for knots in the 3-sphere, but the only fact about $S^3$ we  used is  that $\dim \Is(S^3)=1$.  This is why we require $A$ and $B$ to be homology sphere instanton L-spaces: otherwise we cannot guarantee that the sequences $\{\dim I^\#(A_i(\alpha))\}_{i\in\Z}$ and $\{\dim I^\#(B_i(\beta))\}_{i\in\Z}$ behave as described.}\] In the first case, we define $\nu^\sharp(\alpha)$ to be the unique  $i$ at which the local minimum is achieved; in the second, we define $\nu^\sharp(\alpha)=0$. Note that $\nu^\sharp(\alpha)$ changes sign upon reversing the orientation of $A$. 
The following is a combination of \cite[Theorem 6.1]{bs-concordance2} and \cite[Proposition 3.3]{bs-concordance}:

\begin{proposition}
\label{prop:nu}
 If $\nu^\sharp(\alpha) = 0$ then for some $\lambda$,
\[\dim \Is(A_{0}(\alpha), \lambda) -1 = \dim \Is(A_{-1}(\alpha)) = \dim \Is(A_{1}(\alpha)).\] If $\nu^\sharp(\alpha) \neq 0$ then $\dim\Is(A_{0}(\alpha), \lambda)$ is independent of $\lambda$.\end{proposition}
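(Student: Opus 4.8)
The plan is to analyze the surgery exact triangles relating the three manifolds $A_{-1}(\alpha)$, $A_0(\alpha,\lambda)$, $A_1(\alpha)$, and to keep careful track of how the local extremum structure of the sequence $\{\dim\Is(A_i(\alpha))\}$ forces the ranks of the triangle maps. First I would recall the surgery exact triangle
\[
\cdots \to \Is(A_0(\alpha),\lambda\cup\mu) \to \Is(A_1(\alpha)) \to \Is(A_{-1}(\alpha)) \to \cdots
\]
coming from Floer's triangle applied to the appropriate framed knot in $A_0(\alpha)$; here $\lambda$ ranges over the mod-2 classes, which is exactly the flexibility the proposition is about. Exactness gives the numerical inequalities
\[
\dim\Is(A_0(\alpha),\lambda) \le \dim\Is(A_{-1}(\alpha)) + \dim\Is(A_1(\alpha)),
\]
and the two analogous ones obtained by rotating the triangle. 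Combined with the fact that $\dim\Is(A_{i+1}(\alpha)) = \dim\Is(A_i(\alpha))\pm 1$ (from $\dim\Is(A)=1$), the parities of the three dimensions are constrained: $\dim\Is(A_{-1}(\alpha))$ and $\dim\Is(A_1(\alpha))$ differ from $\dim\Is(A_0(\alpha),0)$ by an odd amount, while switching $\lambda$ changes the parity, so for a suitable $\lambda$ all three agree in parity.

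The first case, $\nu^\sharp(\alpha)=0$: here by definition $\dim\Is(A_{-1}(\alpha)) = \dim\Is(A_1(\alpha)) =: n$, and this common value is a local minimum, so $\dim\Is(A_{\pm 2}(\alpha)) = n+1$, etc. I would invoke \cite[Theorem 6.1]{bs-concordance2} and \cite[Proposition 3.3]{bs-concordance} — the stated sources — to pin down $\dim\Is(A_0(\alpha),\lambda)$. The point is that the sequence $\dim\Is(A_i(\alpha))$ with $i\in\Z$ "turns around" at $i=\pm 1$, and the $0$-surgery with twisted coefficients sits at the bottom; the exact triangle above then forces $\dim\Is(A_0(\alpha),\lambda) \le 2n$, while a matching lower bound (again from the cited results, essentially that the triangle map $\Is(A_1(\alpha))\to\Is(A_{-1}(\alpha))$ induced by the cobordism vanishes in this local-minimum situation, so the other map is injective and the connecting map is surjective onto a rank-$(n+1)$ piece... ) yields $\dim\Is(A_0(\alpha),\lambda) = n+1$. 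So $\dim\Is(A_0(\alpha),\lambda) - 1 = n = \dim\Is(A_{-1}(\alpha)) = \dim\Is(A_1(\alpha))$, as claimed.

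The second case, $\nu^\sharp(\alpha)\neq 0$: now the sequence $\dim\Is(A_i(\alpha))$ is strictly monotone across $i=-1,0,1$ (say increasing, after possibly reversing orientation of $A$, which flips the sign of $\nu^\sharp$). Then $\dim\Is(A_{i+1}(\alpha)) = \dim\Is(A_i(\alpha)) + 1$ for all $i$ near $0$, and in each surgery triangle one of the two adjacent maps must be zero and the other an isomorphism onto its image for dimension reasons; this rigidity means $\Is(A_0(\alpha),\lambda)$ fits into a short exact sequence with the same two end terms regardless of the bundle data, so its dimension is independent of $\lambda$. Concretely, $\dim\Is(A_0(\alpha),\lambda) = \dim\Is(A_0(\alpha))$ for every $\lambda$.

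The main obstacle I anticipate is the lower bound in the first case: the numerical consequences of exactness only give $\dim\Is(A_0(\alpha),\lambda) \le 2n$ and congruences mod $2$, which is not enough on its own to conclude the sharp value $n+1$. One genuinely needs the structural input from \cite{bs-concordance, bs-concordance2} — namely an understanding of which cobordism maps vanish at a local minimum of the dimension sequence (analogous to the "$\nu^+$" phenomena in Heegaard Floer) — to upgrade the inequality to an equality. The $\lambda$-independence in the second case and the parity bookkeeping are comparatively routine once the triangle is set up correctly.
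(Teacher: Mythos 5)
The paper offers no independent argument for Proposition \ref{prop:nu}: it is stated as a direct combination of \cite[Theorem 6.1]{bs-concordance2} and \cite[Proposition 3.3]{bs-concordance}, so deferring to those results is in principle the ``same approach.'' The problem is that your write-up neither states what those results say nor derives the first clause from them, and the reasoning you do supply has concrete errors. The exact triangle you ``recall,'' relating $\Is(A_0(\alpha),\lambda\cup\mu)$, $\Is(A_1(\alpha))$, and $\Is(A_{-1}(\alpha))$, does not exist: the slopes $-1$ and $+1$ are distance two on $\partial N(\alpha)$, so these three fillings do not form a surgery triad; the relevant triangles are the two appearing in Lemma \ref{lem:nu}, each of which passes through $\Is(A)$, whose dimension is $1$. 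Your parity bookkeeping is also off: since $b_1(A_0(\alpha))=1$ we have $\chi(\Is(A_0(\alpha),\lambda))=0$ for \emph{both} bundles, so changing $\lambda$ does not change the parity of $\dim\Is(A_0(\alpha),\lambda)$ (both values are even, while $\dim\Is(A_{\pm1}(\alpha))$ is odd).

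The genuine gap is the case $\nu^\sharp(\alpha)=0$. With the correct triangles, exactness only gives $\dim\Is(A_0(\alpha),\lambda)=n\pm1$ for each of the two choices of $\lambda$, where $n=\dim\Is(A_{-1}(\alpha))=\dim\Is(A_1(\alpha))$; the entire content of the first clause is that the value $n+1$ is realized for at least one $\lambda$, and nothing in your sketch rules out that both bundles give $n-1$. You acknowledge this (``the main obstacle''), gesture at a vanishing cobordism map with an ellipsis, and then simply invoke the cited theorems without saying what they assert (namely, the computation of the $0$-surgery dimensions for both bundles in terms of $\nu^\sharp$ in \cite{bs-concordance2}, resp.\ the vanishing of a composite cobordism map forced by an embedded sphere of self-intersection $0$ on which $w_2$ is odd in \cite{bs-concordance}); so that clause remains unproved. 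By contrast, the second clause can be closed elementarily and more cleanly than you argue: when $\nu^\sharp(\alpha)\neq0$ the recalled structure of the sequence $\{\dim\Is(A_i(\alpha))\}$ gives $|\dim\Is(A_{-1}(\alpha))-\dim\Is(A_1(\alpha))|=2$, and since for \emph{every} $\lambda$ the two triangles force $\dim\Is(A_0(\alpha),\lambda)$ to lie within $1$ of each of these two numbers, it must equal the unique middle value, independent of $\lambda$. This pincer requires no claim about which triangle maps vanish; your assertion that ``one map is zero and the other injective for dimension reasons'' is not justified for the nontrivial bundle and is not needed.
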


The upshot of this discussion is the lemma below. It follows immediately from Proposition \ref{prop:nu}, and will be used shortly in the proof of Theorem \ref{thm:main}:

\begin{lemma} 
\label{lem:nu} Consider the surgery exact triangles: \begin{align*}
\cdots \to \Is(A) &\xrightarrow{F_{-1}} \Is(A_{-1}(\alpha)) \xrightarrow{G_0} \Is(A_0(\alpha), \lambda) \to \cdots\\
\cdots \to \Is(A)& \xrightarrow{F_0} \Is(A_{0}(\alpha),\lambda)  \xrightarrow{G_1} \Is(A_1(\alpha)) \to \cdots
\end{align*}
The following hold:
\begin{itemize}[noitemsep]
\item If $\nu^\sharp(\alpha)=0$, then for some $\lambda$, both $G_0$ and $F_0$ are injective.
\item If $\nu^\sharp(\alpha)>0$, then for any $\lambda$, both $F_0$ and $F_{-1}$ are injective.
\item If $\nu^\sharp(\alpha)<0$, then for any $\lambda$, both $G_0$ and $G_1$ are injective.
\qed
\end{itemize} 
\end{lemma}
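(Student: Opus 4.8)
The plan is to deduce each bullet directly from Proposition \ref{prop:nu} together with the exactness of the two triangles and the general fact, established in the discussion preceding the lemma, that the sequence $\{\dim\Is(A_i(\alpha))\}_{i\in\Z}$ changes by exactly $\pm 1$ at each step and has the shape dictated by $\nu^\sharp(\alpha)$. First I would record the two numerical inputs we will use repeatedly: in the triangle $\Is(A)\xrightarrow{F_{-1}}\Is(A_{-1}(\alpha))\xrightarrow{G_0}\Is(A_0(\alpha),\lambda)\xrightarrow{H}\Is(A)$, exactness gives $\rank F_{-1}+\rank G_0\cdot 0 \dots$ — more precisely, since $\dim\Is(A)=1$, each connecting map into or out of $\Is(A)$ has rank $0$ or $1$, and a map out of (resp.\ into) $\Is(A)$ is injective (resp.\ surjective) precisely when it is nonzero. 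Thus in each triangle at most one of the three maps fails to be "as injective/surjective as possible,'' and which one it is is detected by whether the dimension goes up or down as one moves along the surgery sequence.

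Next I would treat the case $\nu^\sharp(\alpha)>0$. Here the sequence $\{\dim\Is(A_i(\alpha))\}$ has its unique local minimum at $i=\nu^\sharp(\alpha)\geq 1$, so it is strictly decreasing for $i\leq 1$; in particular $\dim\Is(A_{-1}(\alpha))>\dim\Is(A)=1$ is impossible to conclude directly, but $\dim\Is(A_{-1}(\alpha))>\dim\Is(A_0(\alpha),\lambda)$ — wait: I would instead argue as follows. Since the local minimum is at a positive index, moving from $A=A_{+\infty}$-side is not the relevant direction; the correct statement is that $\dim\Is(A_i(\alpha))$ is nonincreasing as $i$ increases through $i\le \nu^\sharp(\alpha)$, hence $\dim\Is(A_{-1}(\alpha))\ge \dim\Is(A_0(\alpha),\lambda)\ge \dim\Is(A_1(\alpha))$ with the first two inequalities coming from Proposition \ref{prop:nu} (the $\nu^\sharp\neq 0$ clause makes $\dim\Is(A_0(\alpha),\lambda)$ well defined, independent of $\lambda$), and from the surgery triangles each consecutive pair differs by exactly $1$. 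In the triangle $\Is(A)\xrightarrow{F_0}\Is(A_0(\alpha),\lambda)\xrightarrow{G_1}\Is(A_1(\alpha))\to$, the fact that $\dim$ drops by one from $\Is(A_0(\alpha),\lambda)$ to $\Is(A_1(\alpha))$ forces $G_1$ to be surjective with $1$-dimensional kernel and hence $F_0$ to be injective; likewise in the first triangle the drop from $\Is(A_{-1}(\alpha))$ to $\Is(A_0(\alpha),\lambda)$ forces $G_0$ surjective and $F_{-1}$ injective. (The signs of the dimension changes are exactly what the definition of $\nu^\sharp$ and the "one local minimum'' dichotomy provide.) The case $\nu^\sharp(\alpha)<0$ is identical after reversing the orientation of $A$, which sends $\nu^\sharp(\alpha)\mapsto -\nu^\sharp(\alpha)$ and swaps the roles of the $F$'s and $G$'s, so one gets injectivity of $G_0$ and $G_1$.

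Finally, for $\nu^\sharp(\alpha)=0$: by Proposition \ref{prop:nu} there is a $\lambda$ with $\dim\Is(A_0(\alpha),\lambda)=\dim\Is(A_{-1}(\alpha))+1=\dim\Is(A_1(\alpha))+1$. In the second triangle $\dim$ drops by one from $\Is(A_0(\alpha),\lambda)$ to $\Is(A_1(\alpha))$, so as above $F_0$ is injective; in the first triangle, rewriting it as $\Is(A_{-1}(\alpha))\xrightarrow{G_0}\Is(A_0(\alpha),\lambda)\xrightarrow{H}\Is(A)\xrightarrow{F_{-1}}$, the dimension rises by one from $\Is(A_{-1}(\alpha))$ to $\Is(A_0(\alpha),\lambda)$, which forces $H$ to be zero and hence $G_0$ to be surjective — and since $\dim\Is(A_0(\alpha),\lambda)=\dim\Is(A_{-1}(\alpha))+1$ contradicts surjectivity, I instead conclude $H$ has rank $1$, $F_{-1}=0$, and $G_0$ is injective. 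The one point needing care — and the only real subtlety — is getting the directions of the dimension inequalities right from the definition of $\nu^\sharp$, i.e.\ correctly bookkeeping which of $A_{-1}$, $A_0$, $A_1$ sits at the bottom of the "staircase'' in each case; everything else is the elementary linear algebra of a three-term exact sequence one of whose terms is $\Q$. Since all of this is a formal consequence of Proposition \ref{prop:nu} and the surgery triangles, the proof is essentially immediate, as claimed. \qed
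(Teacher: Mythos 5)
Your proposal is correct and follows the same (essentially unwritten) route as the paper: the lemma is just the rank count in a three-term exact sequence one of whose terms is $\Is(A)\cong\Q$, combined with the fact that consecutive dimensions jump by exactly $\pm 1$, the V-shape of the sequence around its local minimum at $\nu^\sharp(\alpha)$, and Proposition \ref{prop:nu} (which supplies the special $\lambda$ when $\nu^\sharp(\alpha)=0$ and the $\lambda$-independence otherwise), and your case analysis lands on the right conclusions in all three cases. The one soft spot is the $\nu^\sharp(\alpha)<0$ case: ``reverse the orientation of $A$ and swap the $F$'s and $G$'s'' implicitly invokes the duality $\Is(-Y)\cong\Is(Y)$ and the compatibility of the cobordism maps with it, which you do not justify; it is cleaner, and needs nothing beyond what you already set up, to argue directly that for $\nu^\sharp(\alpha)<0$ the dimension increases by one at both steps $-1\to 0$ and $0\to 1$, and an increase forces the map out of $\Is(A)$ to vanish, hence $G_0$ and $G_1$ to be injective. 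The stray false starts (e.g.\ the momentary claim that $G_0$ is surjective in the $\nu^\sharp(\alpha)=0$ case) are self-corrected and do not affect the final conclusions.
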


\begin{proof}[Proof of Theorem \ref{thm:main}]
Let $\alpha\subset A$ and $\beta \subset B$ be as in the hypothesis of the theorem.  Then $C$ is the homology sphere formed by gluing the  knot complements \[C = (A- N(\alpha)) \cup ({B- N(\beta)}),\] via an orientation-reversing homeomorphism of their boundaries which identifies meridians with Seifert longitudes. Note that the theorem is insensitive to the overall orientation of $C$, so by possibly reversing the orientations of  $A$ and $B$ we can (and will) assume that either:
 \begin{align}
\label{case1}&\nu^\sharp(\alpha) \leq 0 \textrm{ and }\nu^\sharp(\beta) \leq 0, \textrm{ or}\\
\label{case2}&\nu^\sharp(\alpha) < 0\textrm{ and }  \nu^\sharp(\beta) > 0.
\end{align} We will henceforth use the shorthand \[A_i = A_i(\alpha)\textrm{ and }B_i = B_i(\beta)\] for surgeries on these knots. Let $\Sigma_\alpha$ and $\Sigma_\beta$ be minimal genus Seifert surfaces for $\alpha$ and $\beta$, and let $\hat\alpha$ and $\hat\beta$ be pushoffs of these knots in $\Sigma_\alpha$ and $\Sigma_\beta$.

Let $\gamma\subset C$ be a curve on the splicing torus of slope $-1$ with respect to the usual  coordinate system inherited from $A-N(\alpha)$. Note that  0-surgery on $\gamma$ with respect to its torus framing 
yields the connected sum
\begin{equation} \label{eqn:sum}
C_0(\gamma)\cong A_{-1}\#B_{-1}.
\end{equation}
Performing $1$-surgery on $\gamma$ amounts to changing the gluing map by a Dehn twist, and yields another splice
\[ S := C_1(\gamma) \]
of the form described in Theorem~\ref{thm:nonzero}, in which the Seifert longitudes of $\alpha$ and $\beta$ are identified. See Figure~\ref{fig:splicing-schematic} for a schematic of these surgeries. Let \[\Sigma:=\Sigma_\alpha \cup \Sigma_\beta \subset S\] be the closed surface of genus at least two obtained as the union of    the Seifert surfaces $\Sigma_\alpha$ and $\Sigma_\beta$ along their boundaries.

\begin{remark}
We have seen that $C$ is related  to both $A_{-1}\#B_{-1}$ and  $S$ via surgery. We have some understanding of the framed instanton homology of the latter two manifolds thanks to Lemma \ref{lem:nu} and Theorem \ref{thm:nonzero}. 
As mentioned earlier, our strategy in proving Theorem \ref{thm:main} will be to use surgery exact triangles involving these manifolds, and other surgeries in which  $\Sigma$ is compressed, to show that the extremal eigenspaces of the framed instanton homology of $S$ with respect to $\Sigma$ contribute at least four to the dimension of $\Is(C)$.
\end{remark}

\begin{figure}
\begin{tikzpicture}[scale=0.70]
\begin{scope} 
\path[fill=blue!5] (1.3,1.3/1.5) rectangle ++(-0.75,1.5);
\draw[blue,thin] (1.3,1.3/1.5) ++(-0.75,0) -- node[left,pos=0.5,inner sep=1pt] {\tiny$\hat\alpha$} ++(0,0.95) ++(0,0.1) -- ++(0,0.45);
\draw (0,0) -- ++(1.5,1) -- ++(0,1.5) ++(0,0) -- ++(-1.5,-1) -- ++(0,-1.5);
\draw (1.5,2.5) -- ++(-1.75,0) to[out=180,in=180,looseness=3] ++(-1.5,-1) -- ++(1.75,0);
\begin{scope}
\draw[thin,densely dotted] (0,1) -- ++(1.5,0);
\clip (0,-0.1) rectangle (-2.8,1.5);
\draw[thin,densely dashed] (1.5,1) -- ++(-1.75,0) to[out=180,in=180,looseness=3] ++(-1.5,-1) -- ++(1.75,0);
\clip (0,-0.1) rectangle (-2.8,0.25);
\draw (1.5,1) -- ++(-1.75,0) to[out=180,in=180,looseness=3] ++(-1.5,-1) -- ++(1.75,0);
\end{scope}
\draw (-2.7725,0.25) -- ++(0,1.5);
\draw[red,-latex] (1.5,1.2) -- node[above,pos=0.8,inner sep=5pt] {\small$\mu_\alpha$} ++(-1.5,-1);
\draw[blue,-latex] (1.3,1.3/1.5) -- node[left,pos=0.6,inner sep=1pt] {\small$\lambda_\alpha$} ++(0,1.5);
\end{scope}

\begin{scope} 
\draw[thin] (2.75,0) coordinate (g0) -- ++(0,1.5) -- ++(1.5,1);
\draw[thin,densely dotted] (g0) -- ++(1.5,1) -- ++(0,1.5);
\path (g0) -- coordinate[pos=0.33] (g4) coordinate[pos=0.36] (g5) coordinate[pos=0.58] (g1) coordinate[pos=0.62] (g2) ++(1.5,2.5) coordinate (g3);
\draw[thin,densely dashed] (1.75,0) coordinate (a) -- ++(1.5,1) coordinate (b) -- ++(0,1.5) coordinate (c);
\draw[thin,densely dashed] (b) -- ++(1.5,0) coordinate (d) -- ++(0,1.5);
\draw[thin,densely dashed] (d) -- ++(1.5,-1);
\begin{scope} 
\path (c) -- coordinate[pos=0.1] (p0) coordinate[pos=0.9] (p4) ++(-1.5,-1);
\path (c) -- ++(1.5,0) -- coordinate[pos=0.1] (p1) ++(1.5,-1);
\path (p1) -- ++(0,-1) coordinate (p2) (p4) -- ++(0,-1.25) coordinate (p5);
\draw[orange,thin] (p0) -- ++(0,-0.63) ++(0,-0.07) -- ++(0,-0.15) ++(0,-0.10) -- ++(0,-0.05) coordinate (p6);
\draw[orange,thin] (p6) -- ($(p6)!0.175!(p2)$) ($(p6)!0.325!(p2)$) -- (p2) -- ++(0,-0.25) -- ++(1.2,-0.8) coordinate (p3);
\end{scope}
\begin{scope} 
\clip (g0) -- ++(1.5,1) -- ++(0,1.5) -- ++(-1.5,-1) -- ++(0,-1.5);
\path (g0) ++(0,0.2) coordinate (h0) -- coordinate[pos=0.25] (h2) coordinate[pos=0.4] (h3) ++(1.5,2.5) coordinate (h1);
\draw[purple,thin] (h0) -- ($(h0)!0.175!(h2)$) ($(h0)!0.3!(h2)$) -- (h2) (h3) -- ($(h3)!0.175!(h1)$) ($(h3)!0.225!(h1)$) -- (h1);
\draw[purple,thin] (h0) ++(0,-1.5) -- ++(1.5,2.5);
\draw[purple,thin] (h2) to[out=59,in=239] ($(h2)!0.5!(h3)+(-31:0.2)$) coordinate (h4);
\begin{scope} 
\path (h4) ++(0.05,0.105) coordinate (cse) ++(-0.15,0.08) coordinate (cnw);
\draw[purple,thin] (h4) to[out=59,in=239] (h3);
\path[fill=white] (cse) rectangle (cnw);
\end{scope}
\end{scope}
\draw[Green,very thick] (g0) -- ($(g0)!0.375!(g4)$) ($(g0)!0.475!(g4)$) -- (g4) (g5) -- (g1) (g2) -- node[pos=0.3,right] {\small$\gamma$} (g3);
\begin{scope} 
\draw[orange,thin] (p3) -- ($(p3)!0.79!(p5)$) ($(p3)!0.805!(p5)$) -- (p5) -- ++(-0.15,-0.1) ++(1.5,1) -- ++(-0.15,-0.1) -- ++(0,-0.25);
\draw[orange,thin,densely dotted] (p6) -- ++(0,-0.3);
\end{scope}
\draw (c) -- ++(-1.5,-1) -- ++(0,-1.5);
\draw (a) -- ++ (4.5,0) -- ++(0,1.5) -- ++(-1.5,1) -- ++(-1.5,0);
\draw (a) ++(0,1.5) -- ++(4.5,0);
\end{scope}

\begin{scope}[xshift=8cm] 
\path[fill=blue!5] (-1.5,1.2) -- ++(1,0) coordinate (beta1) -- ++(1.5,-1) coordinate (beta2) -- ++ (-1,0);
\draw[blue,thin] (beta1) -- ($(beta1)!0.3!(beta2)$) ($(beta1)!0.375!(beta2)$) -- node[above,pos=0.7] {\tiny$\hat\beta$} (beta2);
\draw (0,0) -- ++(-1.5,1) -- ++(0,1.5) ++(0,0) -- ++(1.5,-1) -- ++(0,-1.5);
\draw (-1.5,2.5) -- ++(1.75,0) to[out=0,in=0,looseness=3] ++(1.5,-1) -- ++(-1.75,0);
\begin{scope}
\draw[thin,densely dotted] (0,1) -- ++(-1.5,0);
\clip (0,-0.1) rectangle (2.8,1.5);
\draw[thin,densely dashed] (-1.5,1) -- ++(1.75,0) to[out=0,in=0,looseness=3] ++(1.5,-1) -- ++(-1.75,0);
\clip (0,-0.1) rectangle (2.8,0.25);
\draw (-1.5,1) -- ++(1.75,0) to[out=0,in=0,looseness=3] ++(1.5,-1) -- ++(-1.75,0);
\end{scope}
\draw (2.7725,0.25) -- ++(0,1.5);
\draw[blue,-latex] (-1.5,1.2) -- node[above,pos=0.8,inner sep=3pt] {\small$\lambda_\beta$} ++(1.5,-1);
\draw[red,-latex] (-1.3,1.3/1.5) -- node[right,pos=0.6,inner sep=2pt] {\small$\mu_\beta$} ++(0,1.5);
\end{scope}

\node at (-0.5,-0.5) {$A - N(\alpha)$};
\node at (1.625,-0.5) {$\cup$};
\node at (4,-0.5) {$T^2 \times [0,1]$};
\node at (6.375,-0.5) {$\cup$};
\node at (8.5,-0.5) {$B - N(\beta)$};
\end{tikzpicture}
\caption{Performing 0-surgery on $\gamma \subset C$ with respect to its $T^2$ framing compresses the torus to a sphere and produces $A_{-1}\#B_{-1}$, because $\gamma$ is isotopic to a peripheral curve of slope $-1$ in each knot exterior.  Doing $+1$-surgery instead produces a splicing in which $\lambda_\alpha$ and $\lambda_\beta$ are glued together, because the orange curve is isotopic to the purple curve and thus bounds a disk.}
\label{fig:splicing-schematic}
\end{figure}
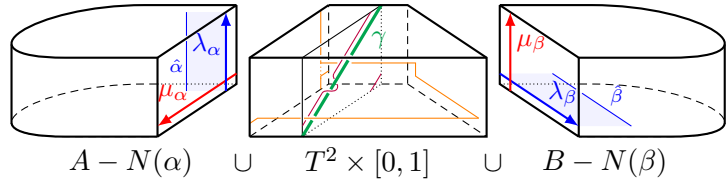

We note that $C$ can  be described, dually, as  $+1$-surgery on the connected sum \[\hat\alpha\#\hat\beta\subset A_{-1}\#B_{-1},\] as   shown on the left in Figure \ref{fig:C}; see  \cite[\S1.1.7]{saveliev} or \cite[Lemma~2.1]{klt}.  This perspective will be helpful in understanding the  commutative diagrams that follow.

\begin{figure}[ht]
\labellist
 \hair 2pt
\pinlabel $\alpha$ at 21 78
\pinlabel {$\beta$} at 134 78
\pinlabel $\alpha$ at 183 78
\pinlabel {$\beta$} at 296 78
\tiny
\pinlabel $S^2$ at 57 9
\pinlabel $A-D^3$ at 39 138
\pinlabel $B-D^3$ at 114 138
\pinlabel $-1$ at 104 112
\pinlabel $-1$ at 43 112
\pinlabel $+1$ at 44 25
\pinlabel $\hat\alpha$ at 231 55
\pinlabel $\hat\beta$ at 244 55
\pinlabel $\gamma$ at 238 102
\endlabellist
\centering
\includegraphics[width=6.7cm]{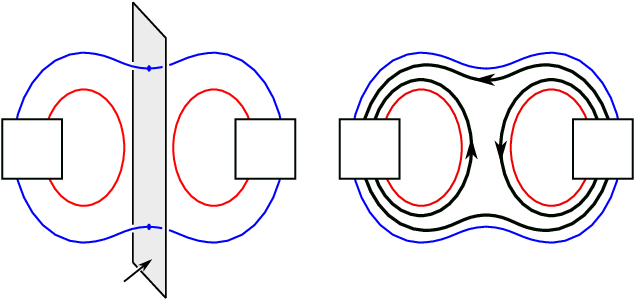}
\caption{Left, a  surgery description of $C$. We will hereafter indicate $-1$, $+1$, and $0$-surgeries using the colors  red, blue, and green, respectively. Right, the curves $\hat\alpha$,  $\hat\beta$, and  $\gamma$ in $C$ shown in black.  }
\label{fig:C}
\end{figure}

The framed instanton homologies of $C,$ $C_0(\gamma),$ and $C_1(\gamma)$  fit into a surgery exact triangle, a portion of which is shown in the top row of the commutative diagram below:
\begin{equation} \label{eq:diagram1}
\begin{gathered}
\xymatrix{
\Is(C)\ar[r]^-{}\ar[d]&\Is(A_{-1}\#{B_{-1}})\ar[r]^-{e} \ar[d]^-{f}& \Is(S,\lambda_C)\ar[d]^-{d}\\
\Is(A_{0}\#{B},\lambda_A)\ar[r]^-{h}&\Is(A_{0}\#{B_{-1}},\lambda_A)\ar[r]^-{g} & \Is(A_{0}\#{B_{0}},\lambda_A\cup\lambda_B),}
\end{gathered}
\end{equation}
depicted in terms of surgery diagrams in Figure \ref{fig:diag1}.

\begin{figure}[ht]
\labellist
\tiny \hair 2pt
\pinlabel $e$ at 272 123
\pinlabel $d$ at 338 75
\pinlabel $g$ at 272 41
\pinlabel $f$ at 224 75
\pinlabel $h$ at 156 41
\pinlabel $\mu_\gamma$ at 354 153
\pinlabel $\mu_\alpha$ at 32 4
\pinlabel $\mu_\alpha$ at 201 4
\pinlabel $\mu_\alpha$ at 415 4
\pinlabel $\mu_\beta$ at 451 4

\endlabellist
\centering
\includegraphics[width=10cm]{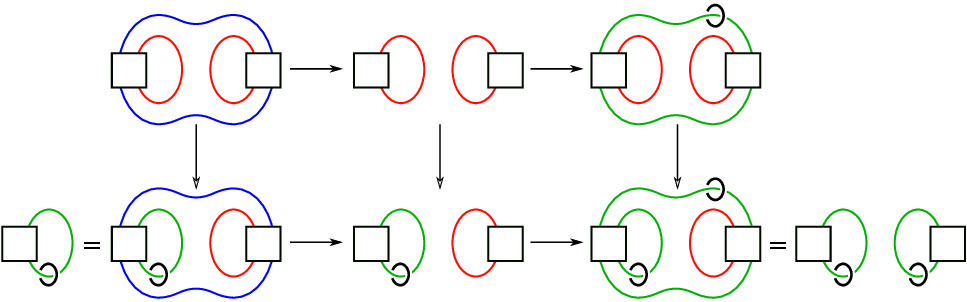}
\caption{Surgery descriptions of the manifolds in the commutative diagram \eqref{eq:diagram1}, and pictures of the meridians $\mu_\alpha$, $\mu_\beta$, and $\mu_\gamma$. We will no longer label the boxes, but will imagine that they contain $\alpha$ and $\beta$ as in Figure \ref{fig:C}. }
\label{fig:diag1}
\end{figure}

The vertical maps in the diagram \eqref{eq:diagram1} are induced by the trace of $0$-surgery on $\hat\alpha$. Here, $\lambda_A, \lambda_B, \lambda_C$ are mod 2 multiples \begin{align*}
\lambda_A &= n_\alpha \mu_\alpha\\
\lambda_B &= n_\beta \mu_\beta\\
\lambda_C &= n_\gamma \mu_\gamma
\end{align*} 
of the meridians $\mu_\alpha,\mu_\beta,\mu_\gamma$ of $\alpha,\beta,\gamma$, respectively, subject to the mod 2 condition  \[n_{\alpha} + n_\beta+n_\gamma=0.\]
Moreover, by the K{\"u}nneth formula for $\Is$ of connected sums, and its naturality with respect to maps induced by split cobordisms \cite[\S7]{scaduto}, if we let $a,b,c$ denote the cobordism maps in the surgery exact triangles
\begin{align*}
\cdots \to \Is(A) \to \Is(A_{-1}) \xrightarrow{a} \Is(A_0, \lambda_A) \to \cdots\hphantom{,}\\
\cdots \to \Is(B) \xrightarrow{c} \Is(B_{-1}) \xrightarrow{b} \Is(B_0, \lambda_B) \to \cdots,
\end{align*}
then we can think of $f,g,h$ as \begin{align*}
f &= a \otimes \id\\
g &= \id\otimes\,b \\
h &= \id\otimes\, c.
\end{align*}
This setup in place, we now prove Theorem \ref{thm:main}, taking the cases \eqref{case1} and \eqref{case2} in turn.

First, assume  we are in case \eqref{case1}, so that  \[\nu^\sharp(\alpha)\leq 0\textrm{ and }\nu^\sharp(\beta)\leq 0.\]Then Lemma \ref{lem:nu} implies that for some choice of $\lambda_A$ and $\lambda_B$, both $a$ and $b$, and therefore $f$ and $g$, are injective. It then follows from the commutativity of the diagram \eqref{eq:diagram1} that $d$ is injective on the image of $e$. On the other hand, \begin{equation}\label{eqn:ker}\Is(S,\lambda_C)_{\Sigma} \subset \ker{d}\end{equation}
since $d$ is induced by $0$-surgery on $\hat\alpha\subset S$, which compresses $\Sigma$, and thus $\Sigma$ is homologous in the associated $2$-handle cobordism to a surface of strictly smaller genus. Therefore, \[\im{e}\cap \Is(S,\lambda_C)_{\Sigma} = 0.\]
Since $\dim \Is(C) \geq \dim \coker(e)$, we have
\[\dim \Is(C) \geq \dim \Is(S,\lambda_C)_{\Sigma}  = 4\cdot\dim\KHI(A,\alpha,g(\alpha))\cdot\dim\KHI(B,\beta,g(\beta))\geq 4,\] by Theorem \ref{thm:nonzero}. Moreover, the leftmost inequality is strict since $\dim \Is(C)$ has the same parity as $ \chi(\Is(C))=1$, proving the theorem in this case.

Next, assume  we are in case \eqref{case2}, so that  \[\nu^\sharp(\alpha) < 0\textrm{ and }\nu^\sharp(\beta) > 0.\] By Lemma \ref{lem:nu}, this implies that $a$ and $c$, and therefore $f$ and $h$, are injective for any choice of $\lambda_A$ and $\lambda_B$. This case will occupy the rest of the proof. Let us henceforth take \[\lambda_A =\lambda_B=\lambda_C=0.\]
The commutative diagram \eqref{eq:diagram1} then becomes: 
\begin{equation} \label{eq:diagram2}
\begin{gathered}
\xymatrix{
 \Is(C)\ar[r]^-{}\ar[d]&\Is(A_{-1}\#{B_{-1}})\ar[r]^-{e} \ar@{^{(}->}[d]^-{f}& \Is(S)\ar[d]^-{d}\\
\Is(A_{0}\#{B})\ar@{^{(}->}[r]^-{h}&\Is(A_{0}\#{B_{-1}})\ar[r]^-{g} & \Is(A_{0}\#{B_{0}}).}
\end{gathered}
\end{equation}
Our goal  will be to once again show that \[\im{e}\cap \Is(S)_{\Sigma} = 0,\] from which the theorem will follow as before. Suppose, for a contradiction, that this intersection contains a nonzero element $e(x)$ for some \[x\in \Is(A_{-1}\#{B_{-1}}).\] Then $f(x)$ is nonzero since $f$ is injective. Note by commutativity and \eqref{eqn:ker} that \[g(f(x))=d(e(x))=0.\] Exactness of the bottom row of \eqref{eq:diagram2} then implies that $f(x) = h(y)$ for some nonzero \[y\in \Is(A_0\# {B}).\]

The maps $f$ and $h$ also fit into the following  diagram in which the rows are portions of surgery exact triangles, and the vertical maps are induced by the trace of $-1$-surgery on $\hat\beta$:
\begin{equation} \label{eq:diagram3}
\begin{gathered}
\xymatrix{
 \Is(A_{-1}\#{B})\ar@{^{(}->}[r]^-{i}\ar@{^{(}->}[d]^-{j}&\Is(A_{0}\# {B})\ar[r]^-{k} \ar@{^{(}->}[d]^-{h}& \Is(A\#B)\ar@{^{(}->}[d]^-{l}\\
\Is(A_{-1}\#{B_{-1}})\ar@{^{(}->}[r]^-{f}&\Is(A_{0}\#{B_{-1}})\ar[r]^-{m} & \Is(A\#{B_{-1}}).}
\end{gathered}
\end{equation}
We claim that $x$ is in the image of $j$. Note that $j$ and $l$ can be viewed as maps of the form $\id \otimes\,c$ and are thus injective.
A similar argument shows that $i$ is injective given that $a$ is. Commutativity and the exactness of the bottom row of this  diagram implies that \[l(k(y))=m(h(y)) = m(f(x)) = 0.\] Then $k(y) = 0$ since $l$ is injective. Exactness of the top row then implies that $y=i(w)$ for some nonzero \[w\in \Is(A_{-1}\# {B}).\] Then \[f(j(w)) = h(i(w)) = h(y) = f(x),\] which implies that $j(w) = x$, as claimed, since $f$ is injective.

The map $j$ from \eqref{eq:diagram3} also fits into the following commutative diagram with $e$ from \eqref{eq:diagram2}:
\begin{equation} \label{eq:diagram4}
\begin{gathered}
\xymatrix{
 &&\Is(A_{0}\#{B_{0}})\ar[d]^{p}\\
 \Is(C_{+1})\ar[r]\ar[d]&\Is(A_{-1}\#{B})\ar[r]^-{n}\ar@{^{(}->}[d]^-{j}&\Is(S_{+1}) \ar[d]^-{o}\\
\Is(C)\ar[r]&\Is(A_{-1}\#{B_{-1}})\ar[r]^-{e}&\Is(S)}
\end{gathered}
\end{equation}
in which the lower vertical maps are induced by the trace of $-1$-surgery on $\hat\beta$; the middle row is a portion of the surgery exact triangle associated to $0$ and $1$-surgery on a copy of $\gamma$ on the splicing torus in $C_{+1}$ with respect to its torus framing; and the rightmost column is part of the surgery  triangle associated to $-1$ and $0$-surgery on $\hat\beta\subset S_{+1}$, as  in Figure \ref{fig:diag2}.

\begin{figure}[ht]
\labellist
\tiny \hair 2pt

\pinlabel $j$ at 171 75
\pinlabel $o$ at 285 75
\pinlabel $p$ at 285 157
\pinlabel $n$ at 206 123
\pinlabel $e$ at 219 39

\endlabellist
\centering
\includegraphics[height=5cm]{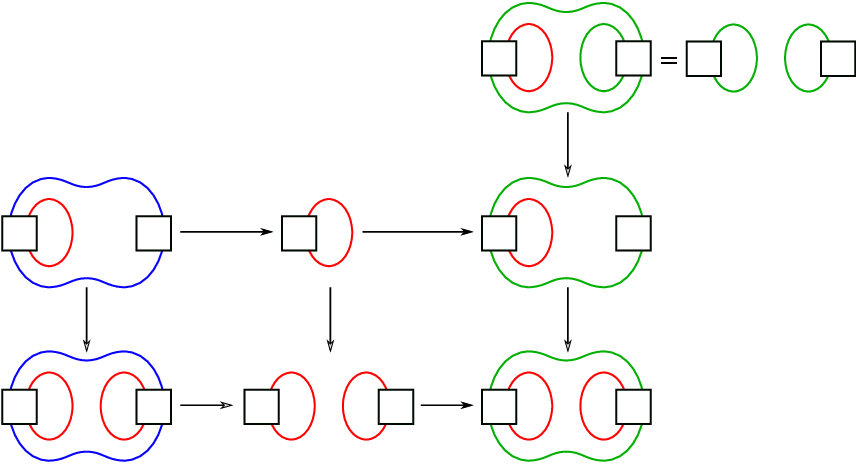}
\caption{Surgery descriptions of the manifolds in the diagram \eqref{eq:diagram4}.}
\label{fig:diag2}
\end{figure}

We can write \[n(w) = z + r,\] where \[z\in \Is(S_{+1})_\Sigma \] is a linear combination of elements in the extremal eigenspaces of $\mu(\Sigma)$ acting on $\Is(S_{+1})$, and $r$ is a linear combination of elements in the other eigenspaces. By commutativity,  \[o(z)+o(r) = o(n(w)) = e(j(w)) = e(x) \in \Is(S)_\Sigma.\]
Since the map $o$ respects the eigenspaces of $\mu(\Sigma)$, it follows in exactly the same way as \eqref{eqn:ker}
that $o(r)=0$, which implies by exactness that $r = p(s)$ for some \[s \in\Is(A_{0}\#{B_{0}}).\] 

To complete the proof, we study the element $u(r)$ in the following diagram involving $n$:
\begin{equation} \label{eq:diagram5}
\begin{gathered}
\xymatrix{
&&\Is(A_0\#B_0)\ar[d]^-{p}\\
 \Is(C_{+1})\ar[r]\ar[d]&\Is(A_{-1}\#{B})\ar[r]^-{n}\ar@{^{(}->}[d]^-{q}&\Is(S_{+1}) \ar[d]^-{u}\\
\Is(A_0\#{B_{+1}},\mu_\alpha)\ar[r]&\Is(A_0\#{B},\mu_\alpha)\ar@{^{(}->}[r]^-{v}&\Is(A_0\#B_0,\mu_\alpha\cup\mu_\beta),}
\end{gathered}
\end{equation}
in which the rows are portions of surgery exact triangles corresponding to $0$- and $1$-surgery on copies of $\gamma$. In the manifolds on the bottom row, this copy of $\gamma$ is isotopic to $\beta$, so this row can be viewed as  the  exact triangle corresponding to $0$- and $1$-surgery on $\beta$, as indicated in Figure \ref{fig:diag3}. The lower vertical maps are induced by the trace of $0$-surgery on $\hat\alpha$.

\begin{figure}[ht]
\labellist
\tiny \hair 2pt
\pinlabel $u$ at 339 83
\pinlabel $p$ at 339 166
\pinlabel $n$ at 273 132
\pinlabel $v$ at 273 49
\pinlabel $q$ at 250 83
\pinlabel $\mu_{\alpha\beta}$ at 321 -2
\pinlabel $\mu_{\alpha}$ at 247 13
\pinlabel $\mu_{\alpha}$ at 416 13
\pinlabel $\mu_{\alpha}$ at 33 13
\pinlabel $\mu_{\beta}$ at 451 13
\endlabellist
\centering
\includegraphics[height=5.2cm]{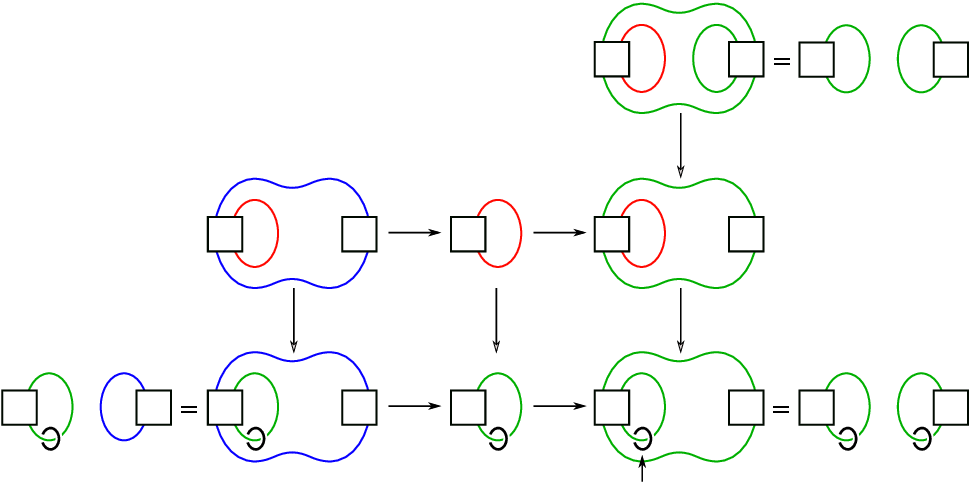}
\caption{Surgery descriptions of the manifolds in the diagram \eqref{eq:diagram5}.}
\label{fig:diag3}
\end{figure}

The map $q$ is of the form $a'\otimes \id$, where $a'$ is the map in the surgery exact triangle \[\cdots \to \Is(A) \to \Is(A_{-1}) \xrightarrow{a'} \Is(A_0, \mu_\alpha) \to \cdots.\] Since $\nu^\sharp(\alpha)<0$, Lemma \ref{lem:nu} implies that $a'$, and therefore $q$, is injective. Similarly, we can view $v$ as a map of the form $\id\otimes\,t$, where $t$ is the map in the surgery exact triangle
\[
\cdots \to \Is(B) \xrightarrow{t} \Is(B_{0},\mu_\beta) \xrightarrow{} \Is(B_1) \to \cdots
\]
Since $\nu^\sharp(\beta)>0$,  Lemma \ref{lem:nu} implies that $t$, and hence $v$, is injective. It follows that $v(q(w))$ is nonzero. By commutativity of \eqref{eq:diagram5}, \[v(q(w)) = u(n(w)) = u(z) + u(r).\] Note that $u(z)=0$ by the same argument as in \eqref{eqn:ker}: namely, that $u$ respects the eigenspaces of $\mu(\Sigma)$, we have \[z\in \Is(S_{+1})_\Sigma,\] and $0$-surgery on $\hat\alpha\subset S_{+1}$ compresses $\Sigma$. Thus, $u(r)$ is nonzero. On the other hand, \[u(r) = u(p(s)),\] and the composition \begin{equation}\label{eqn:sphere} \Is(A_{0}\#{B_{0}})\xrightarrow{p}\Is(S_{+1})\xrightarrow{u} \Is(A_{0}\#{B_{0}},\mu_\alpha\cup\mu_\beta)\end{equation} is identically zero since, as we will show momentarily, the composite cobordism contains a 2-sphere $V$ with  self-intersection $0$ on which the restriction of the $\mathit{SO}(3)$-bundle is nontrivial \cite[Proof of Proposition 3.3]{bs-concordance}, a contradiction.

The sphere $V$ is depicted in Figure~\ref{fig:sphere}, as the union of an annulus $V_\gamma\subset S_{+1}$ with disks $V_\alpha$ and $V_\beta$, where $V_\alpha$ is the core of the 2-handle attachment that induces the map $u$, and $V_\beta$ is the co-core of the 2-handle attachment that induces $p$. The mod 2 class \[[\mu_{\alpha}\cup\mu_\beta]\in H_1(A_{0}\#B_{0};\Z/2\Z)\] is represented by the meridional curve $\mu_{\alpha\beta}$ shown in Figure \ref{fig:diag3}. This curve bounds a properly embedded disk $D$ in the cobordism inducing the map $u$, obtained by pushing the interior of the meridional disk bounded by $\mu_{\alpha\beta}$ in the surgery diagram into the interior of the cobordism. The second Stiefel-Whitney class of the $\mathit{SO}(3)$-bundle on the composite cobordism is Poincar{\'e} dual to $D$; see \cite[\S2.3]{abds}.
The evaluation of this class on $V$ is therefore given by the mod 2 intersection number of $V$ with $D$. But this is equal to the mod 2 intersection number of $V$ with $\mu_{\alpha\beta}$, as indicated in Figure \ref{fig:sphere},  which is 1.
\begin{figure}[ht]
\labellist
\tiny \hair 2pt
\pinlabel $V_\gamma$ at 54.5 62
\pinlabel $V_\beta$ at 72 86
\pinlabel $V_\alpha$ at 33 41
\pinlabel $\mu_{\alpha\beta}$ at 57 14
\pinlabel $D$ at 54 28
\endlabellist
\centering
\includegraphics[height=5.3cm]{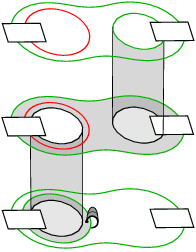}
\caption{The 2-sphere with trivial self-intersection in the cobordism corresponding to the composition in \eqref{eqn:sphere}. It is formed as the union of an annulus $V_\gamma$ with disks $V_\alpha$ and $V_\beta$. It intersects $\mu_{\alpha\beta}$, and thus $D$, in one point.} 
\label{fig:sphere}
\end{figure}
\end{proof}

\begin{proof}[Proof of Corollary \ref{cor:toroidal}] If $Y$ is not a homology sphere, then there is a nontrivial homomorphism from $\pi_1(Y)$ to $\SU(2)$ factoring through $H_1(Y)$. We  may thus assume that $Y$ is a homology sphere, in which case it  can be expressed as the splice of  nontrivial knots $K_1\subset Y_1$ and $K_2\subset Y_2$ in homology spheres, in which the complements are glued by a map that identifies meridians with Seifert longitudes. There is a $\pi_1$-surjective map from $Y$ to each $Y_i$, so if some $\pi_1(Y_i)$ admits a nontrivial $\SU(2)$-representation then so does $\pi_1(Y)$. We may thus assume that neither $\pi_1(Y_1)$ nor $\pi_1(Y_2)$ admits a nontrivial $\SU(2)$-representation. Then these $Y_i$ are instanton $L$-spaces, as explained in the introduction, 
so Theorem \ref{thm:main} tells us that $Y$ is not an instanton $L$-space. But this in turn implies that $\pi_1(Y)$ admits a nontrivial homomorphism to $\SU(2)$.
\end{proof}

\begin{proof}[Proof of Corollary \ref{cor:SL2}]
As in the previous proof, we may assume $Y$ is a  homology sphere.  We can further assume $Y$ is prime, in which case $Y$ is hyperbolic, Seifert fibered, or toroidal, by geometrization. If $Y$ is hyperbolic then the holonomy representation $\pi_1(Y)\to\mathit{PSL}(2,\C)$ lifts to a nontrivial representation $\pi_1(Y)\to\SL(2,\C)$ \cite[Proposition~3.1.1]{cs-splittings}.  If $Y\not\cong S^3$ is Seifert fibered then $\pi_1(Y)$ admits a nontrivial homomorphism to $\SU(2)$, and hence to $\SL(2,\C)$ \cite{fs-seifert}; see also \cite[Theorem 5.1]{bs-stein}. Finally, if $Y$ is toroidal then $\pi_1(Y)$ admits a nontrivial homomorphism to $\SU(2)$, and hence   $\SL(2,\C)$, by Corollary \ref{cor:toroidal}.
\end{proof}

\section{The framed instanton homology of a longitudinal splice}\label{sec:appendix}

The goal of this section is to prove Theorem \ref{thm:nonzero}. We will assume familiarity with sutured instanton homology at the level of \cite{km-excision}. All sutured manifolds in this section are assumed to be balanced with nonempty sutures.

Given a sutured manifold $M = (M,\gamma)$ and a positive integer $k$, let $M(k)$ be the sutured manifold obtained by removing $k$ 3-balls from the interior of $M$ and adding a circle on each of the resulting 2-sphere boundary components to the suture $\gamma$. Recall that  \[\Is(Y) \cong \SHI(Y(1))\] for any closed 3-manifold $Y$.  We start with the preliminary lemma below.

\begin{lemma} 
\label{lem:connect}For any two sutured manifolds $M$ and $N$, \[\dim \SHI(M\#N) = \dim\SHI(M) \cdot \dim \SHI(N(1)).\]
\end{lemma}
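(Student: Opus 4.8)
\textbf{Proof plan for Lemma \ref{lem:connect}.}

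The plan is to reduce the connected sum $M \# N$ to a gluing that can be understood via the gluing/excision machinery for sutured instanton homology. Recall that a connected sum of sutured manifolds is formed by removing a $3$-ball from the interior of each of $M$ and $N$ and identifying the resulting $S^2$ boundary components; equivalently, $M \# N$ is obtained from the disjoint union $M \sqcup N$ by attaching a $1$-handle connecting the two pieces. First I would set up the decomposition of $M \# N$ along the $S^2$ coming from the connect-sum sphere, or rather its pushed-off parallel copy, and recognize that cutting there recovers $M(1)$ and $N(1)$ glued along an $S^2$. The key point is that $\SHI$ of a sutured manifold containing an $S^2$ with a single suture splits as a tensor product under gluing along that sphere, by (the instanton analogue of) Kronheimer--Mrowka's connected sum formula; this is exactly what produces the identity $\Is(Y_1 \# Y_2) \cong \Is(Y_1) \otimes \Is(Y_2)$ in framed instanton homology, together with $\Is(Y) \cong \SHI(Y(1))$.

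The key steps, in order, would be: (1) identify $M \# N$ with the manifold obtained by gluing $M(1)$ to $N(1)$ along one of their spherical suture components, so that the glued sphere becomes an internal $S^2$ meeting the sutures in one circle; (2) invoke the connected-sum formula for $\SHI$ — proved via Floer's excision theorem applied along this $S^2$ — to conclude $\SHI(M \# N) \cong \SHI(M(1)) \otimes \SHI(N(1))$, hence $\dim \SHI(M\#N) = \dim \SHI(M(1)) \cdot \dim \SHI(N(1))$; (3) finish by showing $\dim \SHI(M(1)) = \dim \SHI(M)$, i.e. that adding one extra spherical suture component does not change the dimension. For step (3), removing a single ball and adding a meridional suture to the new $S^2$ corresponds under the excision/connect-sum formula to tensoring with $\SHI$ of the model sutured manifold $(S^3$ with two spherical sutures$)$, which is $\Q$, so $\dim\SHI(M(1)) = \dim\SHI(M)\cdot 1$; alternatively one can cite directly that $\SHI(M(k+1)) \cong \SHI(M(k)) \otimes \Is(S^3)$. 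Combining (2) and (3) yields $\dim \SHI(M\#N) = \dim \SHI(M) \cdot \dim \SHI(N(1))$, which is the asymmetric-looking statement in the lemma; the asymmetry is only apparent, since one of the two connect-sum balls can be attributed to $M$ and the matching "$(1)$" to $N$.

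The main obstacle I anticipate is making the first step precise: carefully matching the sutured-manifold structure on $M \# N$ (which ball to remove, where the suture on the connect-sum sphere goes, and why the result of cutting along the natural internal $S^2$ is genuinely $M(1) \sqcup N(1)$ rather than some other sutured pair) so that the excision theorem applies cleanly along a sphere meeting the sutures in exactly one circle. A secondary subtlety is bookkeeping the difference between "$\#$" of sutured manifolds and the operation "$(k)$" of adding spherical suture components, so that the count of extra $S^2$'s on each side is exactly right and the final formula comes out with $\SHI(M)$ on one factor and $\SHI(N(1))$ on the other. Once the internal $S^2$ is correctly identified, the rest is a direct appeal to the Künneth-type formula for $\SHI$ under gluing along spheres, together with $\SHI$ of the standard sphere-sutured $S^3$ being one-dimensional.
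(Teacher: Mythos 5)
Your plan has two genuine errors, which happen to cancel, so neither intermediate claim is correct even though the final formula is. First, step (2) fails: there is no excision along spheres in instanton Floer theory. Floer/Kronheimer--Mrowka excision requires cutting along surfaces of genus at least one (with an admissibility condition on the bundle), and the connected sum formula for $\Is$ is not proved by sphere excision. Moreover, the formula you want it to give, $\SHI(M\#N)\cong \SHI(M(1))\otimes\SHI(N(1))$, is simply false: gluing along an internal $S^2$ is not ``free.'' A clean counterexample to the general principle is to glue $S^3(1)$ to $M(1)$ along their spherical boundary components, which returns $M$ itself, whereas the proposed tensor formula would give $\dim\SHI(M(1))\cdot\dim\SHI(S^3(1)) = 2\dim\SHI(M)$. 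Second, step (3) is also false: adding a spherical suture component doubles the dimension, $\dim\SHI(M(1)) = 2\dim\SHI(M)$ (this is exactly Corollary \ref{cor:hole}, whose proof in the paper uses the present lemma), and correspondingly $\SHI(S^3(2))\cong\Q^2$, not $\Q$; the statement $\SHI(M(k+1))\cong\SHI(M(k))\otimes\Is(S^3)$ is likewise wrong. Relatedly, the asymmetry in the lemma is not ``only apparent'': $\dim\SHI(M)\cdot\dim\SHI(N(1)) = 2\dim\SHI(M)\dim\SHI(N)$, while your step (2) would give $4\dim\SHI(M)\dim\SHI(N)$.

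The paper's argument avoids all of this and is essentially a one-liner of a different flavor: observe that $M\#N$ is obtained from the disjoint union $M\sqcup N(1)$ by attaching a single contact $1$-handle, and contact $1$-handle attachment preserves sutured instanton homology, so $\SHI(M\#N)\cong\SHI(M\sqcup N(1))\cong\SHI(M)\otimes\SHI(N(1))$. If you want to salvage your approach, you would need a genuine sphere-gluing statement for $\SHI$, and the correct one already carries the extra $\Q^2$ factor that your steps (2) and (3) each miss; the contact-handle route sidesteps having to prove any such gluing theorem.
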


\begin{proof}
Note that $M\#N$ is obtained from $M\sqcup N(1) \cong M \sqcup S^3(1)\#N$
by attaching a contact 1-handle. Since such attachments preserve sutured instanton homology \cite[\S3.2]{bs-instanton}, we have \[\SHI(M\#N) \cong \SHI(M\sqcup N(1))\cong \SHI(M)\otimes \SHI(N(1)),\] proving the lemma.
\end{proof}

\begin{corollary} \label{cor:hole}For any sutured manifold $M$ and positive integer $k$, \[\dim\SHI(M(k)) = 2^k\cdot \dim\SHI(M).\]
\end{corollary}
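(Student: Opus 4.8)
The plan is to induct on $k$, using Lemma~\ref{lem:connect} as the main input. The base case $k=1$ is exactly the identity $\dim \SHI(M(1)) = 2\cdot\dim\SHI(M)$, so I would first verify that. Observe that $M(1)$ is obtained from $M \sqcup S^3(1)$ by attaching a contact $1$-handle connecting the new $S^2$ boundary component (with its suture) to the rest of $M$; equivalently, $M(1) \cong M \# S^3(1)$ after noting that connected sum in the sutured category is realized by such a handle attachment. Applying Lemma~\ref{lem:connect} with $N = S^3$ gives
\[
\dim\SHI(M(1)) = \dim\SHI(M\#S^3) = \dim\SHI(M)\cdot\dim\SHI(S^3(1)).
\]
Since $\SHI(S^3(1)) \cong \Is(S^3) \cong \Q$, the right-hand side is $2\cdot\dim\SHI(M)$—wait, that only gives a factor of $1$; instead I should apply the lemma in the other order, i.e. with the roles arranged so that $N(1) = S^3(1)\#\,(\text{something})$. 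Let me restate: $M(1)$ is $M$ with a $3$-ball removed and a suture added to the resulting sphere, which is precisely $M \# S^3(1)$ in the sense that it is built from $M \sqcup S^3(1)$ by a contact $1$-handle. By the argument in the proof of Lemma~\ref{lem:connect}, $\SHI(M \# N) \cong \SHI(M) \otimes \SHI(N(1))$; taking $N = S^3$ and noting $\SHI(S^3(1)) \cong \Q$ is not quite it either. The cleanest route: directly observe $M(1) \cong M \# S^3(1)$ and that $S^3(1)(1) = S^3(2)$, so $\dim\SHI(M(1)) = \dim\SHI(M)\cdot\dim\SHI(S^3(2))$, and $\dim\SHI(S^3(2)) = \dim\Is(S^3\#S^3)\cdot$—this is circular, so I will instead just cite that a single $3$-ball removal with a suture doubles the dimension, which is the known fact $\dim\SHI(M(1)) = 2\dim\SHI(M)$ from \cite{km-excision} or deduce it from Lemma~\ref{lem:connect} applied to $M\#S^3$.

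Granting the base case, the inductive step is immediate: $M(k+1) = (M(k))(1)$, since removing $k+1$ balls and adding sutures can be done one ball at a time, and the operation $N \mapsto N(1)$ only depends on $N$ near a chosen interior point. By the base case applied to $N = M(k)$,
\[
\dim\SHI(M(k+1)) = \dim\SHI\big((M(k))(1)\big) = 2\cdot\dim\SHI(M(k)) = 2\cdot 2^k\cdot\dim\SHI(M) = 2^{k+1}\dim\SHI(M),
\]
using the inductive hypothesis in the penultimate equality. This completes the induction.

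The main obstacle is really just pinning down the base case cleanly, i.e.\ establishing $\dim\SHI(M(1)) = 2\dim\SHI(M)$ from the tools at hand. The natural way is to recognize that $M(1)$ is obtained from the disjoint union $M \sqcup S^3(1)$ by attaching a contact $1$-handle, so $\SHI(M(1)) \cong \SHI(M)\otimes\SHI(S^3(1)) \cong \SHI(M)\otimes\Is(S^3)$—but this gives a factor of $1$, not $2$, which signals that $M(1)$ is \emph{not} $M \sqcup S^3(1)$ glued by one handle; rather, removing a ball from $M$ and adding a suture to the resulting $S^2$ is the connected sum $M \# S^3(1)$ in the sense of Lemma~\ref{lem:connect}, where the $S^3(1)$ factor already carries one ``hole.'' Then Lemma~\ref{lem:connect} gives $\dim\SHI(M(1)) = \dim\SHI(M\#S^3(1))$; treating $S^3(1)$ as ``$N$'' with $N(1) = S^3(2)$ still leaves me needing $\dim\SHI(S^3(2))$. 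The honest resolution is: apply Lemma~\ref{lem:connect} with $M$ as stated and $N = S^3$, giving $\dim\SHI(M\#S^3) = \dim\SHI(M)\cdot\dim\SHI(S^3(1)) = \dim\SHI(M)$, so $M\#S^3 \cong M$ as expected; and separately $M(1)$, being $M$ with a hole, satisfies the known doubling formula. So in the write-up I would simply invoke $M(1) \simeq M\#S^3(1)$ and Lemma~\ref{lem:connect} with $N=S^3$ reinterpreted, or more safely cite the standard fact from \cite[\S3]{bs-instanton} that adding a sutured $3$-ball (one copy of $S^3(1)$ via connected sum) multiplies $\dim\SHI$ by $\dim\SHI(S^3(1)\#S^3(1))/\dim\SHI(S^3(1)) = 2$. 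I expect the final proof to handle this by a one-line appeal to Lemma~\ref{lem:connect} with the observation $M(k) = M \# (\#^k S^3)(\text{with holes})$, reducing everything to $\dim\SHI(S^3(k)) = 2^{k-1}$, which itself follows by the same induction.
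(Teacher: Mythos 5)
Your inductive framework is exactly the paper's: $M(k) \cong M(k-1)\,\#\,S^3(1)$, so Lemma~\ref{lem:connect} (applied with $N = S^3(1)$, hence $N(1) = S^3(2)$) gives $\dim\SHI(M(k)) = \dim\SHI(M(k-1))\cdot\dim\SHI(S^3(2))$, and everything reduces to showing $\dim\SHI(S^3(2)) = 2$. But that single computation is the actual content of the corollary, and your proposal never establishes it. You correctly notice that every route you try through Lemma~\ref{lem:connect} alone is circular: the lemma with $N=S^3$ only tells you $\dim\SHI(M\#S^3)=\dim\SHI(M)$, and the version with $N=S^3(1)$ feeds back the unknown quantity $\dim\SHI(S^3(2))$. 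Your fallback---citing a ``standard doubling fact'' and justifying it by the ratio $\dim\SHI(S^3(1)\#S^3(1))/\dim\SHI(S^3(1)) = 2$---is again circular, since $S^3(1)\#S^3(1)\cong S^3(2)$ and asserting its $\SHI$ has dimension $2$ is precisely the missing step; nor is this formula something the paper treats as citable from \cite{km-excision} or \cite{bs-instanton}, which is why it proves it here.

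The paper closes the gap with one extra geometric observation: attaching a contact $1$-handle to $S^3(2)$ joining its two sutured sphere boundary components yields $(S^1\times S^2)(1)$, and contact $1$-handle attachment preserves $\SHI$ (\cite[\S3.2]{bs-instanton}, the same input as in Lemma~\ref{lem:connect}). Hence
\[
\dim\SHI(S^3(2)) \;=\; \dim\SHI\bigl((S^1\times S^2)(1)\bigr) \;=\; \dim \Is(S^1\times S^2) \;=\; 2,
\]
where the last equality is a known computation of framed instanton homology. With this in hand your induction goes through verbatim, but without it (or some equivalent independent computation) the base case, and hence the whole argument, is unsupported.
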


\begin{proof}
Note that $(S^1\times S^2)(1)$ can be obtained   by attaching a contact 1-handle to $S^3(2)$, so  \[\dim\SHI(S^3(2)) = \dim\SHI((S^1\times S^2)(1)) = \dim \Is(S^1\times S^2) = 2.\] Since $M(k)\cong M(k-1)\# S^3(1)$,  Lemma \ref{lem:connect} says that \[\dim\SHI(M(k)) = \dim\SHI(M(k-1))\cdot \dim\SHI(S^3(2)) = 2\cdot \dim\SHI(M(k-1)), \]  and the corollary follows by induction.
\end{proof}

\begin{corollary}\label{cor:nonzero} For any nontrivial knot $\alpha$ in a closed, oriented homology sphere $A$, \[\dim\KHI(A,\alpha,g(\alpha)) \geq 1.\]
\end{corollary}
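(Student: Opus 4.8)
The plan is to relate the sutured instanton homology $\KHI(A,\alpha,g(\alpha))$ to something we already understand, namely $\SHI$ of the knot complement with an appropriate suture, and then exploit the fact that $\alpha$ is nontrivial. Recall that $\KHI(A,\alpha,g(\alpha))$ is by definition the top Alexander-grading summand of the knot instanton homology, which can be realized as $\SHI$ of the sutured manifold obtained by decomposing the knot complement $A - N(\alpha)$ along a minimal genus Seifert surface for $\alpha$. More precisely, I would use the sutured manifold $(A - N(\alpha), \gamma_{2g})$ with $2g = 2g(\alpha)$ parallel meridional sutures, whose $\SHI$ decomposes along Alexander gradings, with the extremal grading contributing $\KHI(A,\alpha,g(\alpha))$; decomposing along the genus-$g$ Seifert surface isolates this top summand, and a theorem of Kronheimer--Mrowka (the analogue of Gabai's sutured decomposition / Juhász's surface decomposition in the instanton setting) tells us this decomposed $\SHI$ is nonzero precisely because the complement is taut. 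The key input is that, since $\alpha$ is a \emph{nontrivial} knot in a homology sphere, its complement is irreducible and the Seifert surface is norm-minimizing, so the decomposed sutured manifold is taut, hence has nonzero sutured instanton homology.

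Concretely, the steps I would carry out are: (1) recall the identification $\KHI(A,\alpha,g(\alpha)) \cong \SHI(M')$ where $M'$ is the sutured manifold obtained by decomposing $(A-N(\alpha),\gamma_\mu)$ (complement with two meridional sutures) along a minimal genus Seifert surface $R$ for $\alpha$; (2) observe that because $\alpha$ is nontrivial, $g(\alpha) \geq 1$, the complement is irreducible, and $R$ is Thurston-norm-minimizing, so by Gabai's theory the decomposed sutured manifold $M'$ is taut; (3) invoke the nonvanishing theorem for sutured instanton homology of taut balanced sutured manifolds (Kronheimer--Mrowka, \cite{km-excision}, or as used elsewhere in the literature on $\SHI$), which gives $\SHI(M') \neq 0$, i.e.\ $\dim \SHI(M') \geq 1$; (4) conclude $\dim\KHI(A,\alpha,g(\alpha)) \geq 1$. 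The machinery of Lemma~\ref{lem:connect} and Corollary~\ref{cor:hole} established just above is not strictly needed for this corollary — it is there for the main argument of Theorem~\ref{thm:nonzero} — but the taut-nonvanishing principle is the substantive ingredient.

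The main obstacle, and the point that needs to be stated carefully, is the precise form of the nonvanishing theorem in instanton homology and making sure the sutured manifold we decompose to is genuinely taut rather than merely non-empty: one must check that decomposing the complement along the Seifert surface does not produce a product sutured manifold in a degenerate way (which would be fine — product sutured manifolds have one-dimensional $\SHI$) and does not produce a non-taut piece (which is ruled out by norm-minimality of $R$ together with irreducibility of the knot complement, both consequences of $\alpha$ being nontrivial). In the write-up I would cite the relevant nonvanishing result for $\SHI$ of taut sutured manifolds and the identification of the top $\KHI$ grading with sutured instanton homology of the decomposed complement, both of which are standard consequences of the Kronheimer--Mrowka framework, so the actual argument is short.
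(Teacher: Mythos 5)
Your overall route---identify $\KHI(A,\alpha,g(\alpha))$ with $\SHI$ of the complement decomposed along a minimal genus Seifert surface and invoke the Kronheimer--Mrowka nonvanishing theorem for taut balanced sutured manifolds \cite[Theorem 7.2]{km-excision}---is the same as the paper's, but your step (2) contains a genuine gap: it is simply not true that nontriviality of $\alpha$ forces $A - N(\alpha)$ to be irreducible. Here $A$ is an arbitrary homology sphere, not $S^3$. If $A$ is not prime, or if $\alpha$ lies in a ball inside some $A \not\cong S^3$ (a ``local'' knot, for which $A - N(\alpha) \cong (S^3 - N(\alpha'))\,\#\, A$), then the complement contains a sphere bounding no ball, so the decomposed sutured manifold $A(\Sigma)$ is reducible and hence not taut, and the nonvanishing theorem does not apply to it directly. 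Norm-minimality of the horizontal boundary does follow from minimality of the genus of $\Sigma$, so reducibility is the \emph{only} possible failure of tautness---but it is a real one, and your argument stops exactly there. (Since $H_2(A-N(\alpha))=0$, every such sphere separates and splits off a closed homology sphere summand $Y$; the substantive point left unaddressed is why such a summand cannot kill $\SHI$.)

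This is precisely why the paper proves Lemma \ref{lem:connect} and Corollary \ref{cor:hole} \emph{before} this corollary, contrary to your remark that they are ``not strictly needed'': the paper writes $A(\Sigma) = A(\Sigma)' \# Y$ with $A(\Sigma)'$ taut and $Y$ a closed homology sphere, and uses those two results to get
\[
\dim\SHI(A(\Sigma)) \;=\; \dim\SHI(A(\Sigma)')\cdot \dim \Is(Y),
\]
which is nonzero because $A(\Sigma)'$ is taut and $\chi(\Is(Y))=1$ forces $\Is(Y)\neq 0$. To repair your write-up you must add this connected-sum step (or an equivalent argument handling the reducible case); as written, the claim that irreducibility is ``a consequence of $\alpha$ being nontrivial'' is false outside of $S^3$, and the corollary is needed in the generality of arbitrary homology sphere instanton $L$-spaces for Theorem \ref{thm:nonzero}.
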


\begin{proof} Let $\Sigma$ be a minimal genus Seifert surface for $\alpha$, and let $A(\Sigma)$ be the sutured manifold obtained from $A$ by removing a neighborhood of $\alpha$ and then of $\Sigma$, with suture consisting of one curve  isotopic to $\partial\Sigma$. Then  \[\KHI(A,\alpha,g(\alpha))\cong \SHI(A(\Sigma)).\] 
Sutured instanton homology is nonzero for any taut sutured manifold \cite[Theorem 7.2]{km-excision}. Since $\Sigma$ has minimal genus, the only way $A(\Sigma)$  can fail to be taut is if it is not prime. In this case, we can write \[A(\Sigma) = A(\Sigma)' \# Y,\] where $A(\Sigma)'$ is a taut sutured manifold and $Y$ is a  homology sphere. Then \[A(\Sigma)(1) = A(\Sigma)' \# Y(1),\] whence Lemma \ref{lem:connect} and Corollary \ref{cor:hole} tell us that \begin{align*}\dim\SHI(A(\Sigma)) &=(1/2)\cdot\dim\SHI(A(\Sigma)(1))\\
&= (1/2)\cdot \dim\SHI(A(\Sigma)')\cdot \dim \SHI(Y(2))\\
&= \dim\SHI(A(\Sigma)')\cdot \dim \SHI(Y(1)) = \dim\SHI(A(\Sigma)')\cdot \dim \Is(Y), 
\end{align*} which is nonzero since $A(\Sigma)'$ is taut and $\chi(\Is(Y))=1$.
\end{proof}

We will require one more lemma before proving Theorem \ref{thm:nonzero}.
\begin{lemma}
\label{lem:cut}
Let  $Y$ be a closed, oriented $3$-manifold. Let  $\Sigma$ be a closed, oriented surface in $Y$ of genus at least two, and $\lambda$ a multicurve in $Y$ which intersects some surface of positive genus disjoint from $\Sigma$ in an odd number of points. Suppose $(Y',\lambda')$ is obtained by cutting $(Y,\lambda)$ open along $\Sigma$ and regluing by a homeomorphism of $\Sigma$ which preserves $\Sigma\cap \lambda$. Then \[I_*(Y|\Sigma)_\lambda\cong I_*(Y'|\Sigma)_{\lambda'}. \] \end{lemma}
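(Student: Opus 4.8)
The statement is a ``cut-and-reglue invariance'' result for the extremal eigenspace $I_*(Y|\Sigma)_\lambda$, and the natural engine to drive it is Floer's excision theorem in the form used in \cite{km-excision}. The plan is to realize both $Y$ and $Y'$ as the result of applying excision to one and the same manifold, so that excision produces a single isomorphism matching up their extremal eigenspaces. Concretely, I would take two parallel copies $\Sigma_1,\Sigma_2$ of $\Sigma$ inside $Y$ (pushing $\Sigma$ off itself, using a bicollar), cut $Y$ open along $\Sigma_1\sqcup\Sigma_2$ to obtain a manifold with two pairs of boundary surfaces, and then reglue in the two possible ways: regluing ``straight'' recovers $Y$ (or rather $Y$ with an extra $\Sigma\times S^1$ summand, depending on how one sets it up), while regluing with the prescribed homeomorphism $h$ of $\Sigma$ inserted on one of the gluings recovers $Y'$. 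Excision along a genus-$\geq 2$ surface identifies the $(2g-2,2)$-eigenspaces before and after, and crucially it is insensitive to which gluing diffeomorphism is used, provided the diffeomorphism preserves the relevant homology class and the bundle data — this is exactly where the hypothesis that $h$ preserves $\Sigma\cap\lambda$ enters.

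\textbf{Key steps, in order.} First I would recall the precise statement of the excision isomorphism from \cite[\S7]{km-excision}: given a $3$-manifold containing two disjoint genus-$g$ surfaces $R_1,R_2$ carrying admissible bundle data with the surfaces homologous (or more precisely, cobounding), cutting and regluing along $R_1\sqcup R_2$ induces an isomorphism on the simultaneous $(2g-2,2)$-eigenspaces $I_*(\cdot|R)$. Second, I would set up the cut manifold: let $Z$ be $Y$ cut open along a single copy of $\Sigma$, so $\partial Z = \Sigma^+ \sqcup \Sigma^-$; then $Y$ is recovered by gluing $\Sigma^+$ to $\Sigma^-$ by the identity, and $Y'$ by gluing via $h$. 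Third — and this is the heart of the argument — I would reinterpret both of these as excision: introduce an auxiliary product piece $\Sigma\times[0,1]$ (or a copy of $Y$ itself) so that the ``regluing by $h$'' operation becomes literally an instance of the Floer excision move applied to two copies of $\Sigma$, with $h$ absorbed into the identification. The admissibility hypothesis on $\lambda$ — that it hits an odd-order surface disjoint from $\Sigma$ — guarantees the bundle stays admissible throughout the construction and that the operators $\mu(\Sigma),\mu(\pt)$ behave consistently, so the excision isomorphism restricts to the extremal eigenspaces. Fourth, I would check that $h$ preserving $\Sigma\cap\lambda$ ensures the Poincar\'e dual surface data (the multicurve $\lambda$) is respected by the regluing, so that the bundle on $Y'$ is the one named by $\lambda'$. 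Assembling these gives $I_*(Y|\Sigma)_\lambda \cong I_*(Y'|\Sigma)_{\lambda'}$.

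\textbf{Main obstacle.} The delicate point is packaging ``reglue along one surface by $h$'' as an honest instance of Floer excision, which is stated for cutting along a \emph{pair} of disjoint surfaces and swapping the two halves. One standard trick is to note that cutting $Y$ along $\Sigma$ and regluing by $h$ is the same as cutting $Y \sqcup (\Sigma\times S^1)$ along $\Sigma \sqcup (\Sigma\times\{\pt\})$ and reglueing in the crossed fashion — but one must be careful that $\Sigma\times S^1$ contributes only an overall tensor factor that is the same on both sides, or better, choose the auxiliary piece so that it cancels. I would need to verify that the resulting excision isomorphism is actually the identity on that auxiliary factor, or simply arrange the construction (as in \cite[\S7]{km-excision}, where excision is phrased so that one may glue back by any diffeomorphism isotopic to the identity through maps preserving the data, and more generally by any diffeomorphism compatible with the eigenspace decomposition) so that the auxiliary piece is unnecessary and $h$ is inserted directly. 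A secondary technical check is that genus $\geq 2$ is genuinely needed: it ensures the extremal eigenvalues $\pm(2g-2)$ are simple among the allowed eigenvalues and thus that $I_*(Y|\Sigma)_\lambda$ is well-defined and preserved; the hypothesis is already in place, so this is just a matter of citing the eigenvalue constraints recalled in \S\ref{sec:background}.
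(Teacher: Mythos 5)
There is a genuine gap: your whole argument runs through Floer excision along $\Sigma$ (or two parallel copies of it), but the excision theorem you are invoking \cite[Theorem 7.7]{km-excision} requires the bundle data to be \emph{odd on the cutting surfaces}, i.e.\ it applies only when $\lambda$ meets each excision surface in an odd number of points. The hypothesis of the lemma guarantees admissibility only via a surface of positive genus \emph{disjoint} from $\Sigma$; the intersection $\Sigma\cap\lambda$ may be even or empty, and in the applications in the paper it typically is. In that case excision along $\Sigma$ is simply not available, and your auxiliary trick of cutting $Y\sqcup(\Sigma\times S^1)$ along $\Sigma\sqcup(\Sigma\times\{\pt\})$ does not rescue it: the product piece must itself carry an admissible bundle pairing oddly with $\Sigma$ for the relevant rank-one computation, which again forces the odd-intersection condition you do not have. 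You also cannot just add to $\lambda$ a curve crossing $\Sigma$ once, since that changes the $\mathit{SO}(3)$-bundle and hence the group being computed. So your plan proves only the easy case $|\Sigma\cap\lambda|$ odd, which the paper dispatches in one sentence, and misses the actual content of the lemma.

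The paper's proof of the general case uses a different mechanism, which is the idea missing from your proposal: factor the regluing homeomorphism (preserving $\Sigma\cap\lambda$) into Dehn twists along essential curves $\gamma\subset\Sigma$, and observe that regluing by a positive twist along $\gamma$ realizes $Y'$ as $-1$-surgery on $\gamma\subset Y$ with its surface framing. The surgery exact triangle relating $Y$, $Y'\cong Y_{-1}(\gamma)$, and $Y_0(\gamma)$ has maps that respect the simultaneous eigenspaces of $\mu(\Sigma)$ and $\mu(\pt)$, and $0$-surgery on $\gamma$ compresses $\Sigma$, so the extremal eigenspace $I_*(Y_0(\gamma)|\Sigma)$ vanishes; exactness then forces the triangle map to restrict to an isomorphism $I_*(Y|\Sigma)_\lambda\cong I_*(Y'|\Sigma)_{\lambda'}$. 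If you want to keep an excision-flavored argument, you would need either this exact-triangle step or a version of excision valid for even pairing, which is not what \cite{km-excision} provides.
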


\begin{proof} When $\Sigma\cap \lambda$ consists of an odd number of points, this is simply excision. In general, it suffices to consider the case in which the regluing homeomorphism is a positive Dehn twist along an essential curve $\gamma\subset \Sigma$. In this case, $Y'$ is the result of $-1$-surgery on $\gamma\subset Y$. Consider the  exact triangle relating the Floer homologies of $Y$, $Y_{-1}(\gamma)\cong Y'$, and $Y_0(\gamma)$:
\[\cdots \to I_*(Y)_{\lambda} \xrightarrow{f} I_*(Y')_{\lambda'} \xrightarrow{} I_*(Y_0(\lambda))_{\lambda''\cup \mu} \to \cdots,\]
where $\mu$ is a meridian of $\gamma$. The maps in this triangle respect the eigenspaces of the operators $\mu(\Sigma)$ and $\mu(\pt)$. On the other hand, $0$-surgery on $\gamma$ compresses $\Sigma$, which implies that \[I_*(Y_0(\lambda)|\Sigma)_{\lambda''\cup \mu}=0.\] It follows that $f$ restricts to an isomorphism from $I_*(Y|\Sigma)_\lambda$ to $ I_*(Y'|\Sigma)_{\lambda'}$.
\end{proof}

This immediately implies the corollary below, since $\Is(Y,\lambda|\Sigma)=I_*(Y\#T^3|\Sigma)_{\lambda\cup \lambda_T}$.
\begin{corollary}\label{cor:cut}
Let  $Y$ be a closed, oriented $3$-manifold. Let  $\Sigma$ be a closed, oriented surface in $Y$ of genus at least two, and $\lambda$  a multicurve in $Y$. Suppose $(Y',\lambda')$ is obtained by cutting $(Y,\lambda)$ open along $\Sigma$ and regluing by a homeomorphism of $\Sigma$ which preserves $\Sigma\cap \lambda$. Then \[\Is(Y,\lambda|\Sigma)\cong \Is(Y',\lambda'|\Sigma).  \] 
\end{corollary}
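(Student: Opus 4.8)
The plan is to deduce Corollary~\ref{cor:cut} directly from Lemma~\ref{lem:cut}, applied not to $(Y,\lambda)$ itself but to the connected sum $(Y\#T^3,\lambda\cup\lambda_T)$ that appears in the definition $\Is(Y,\lambda|\Sigma)=I_*(Y\#T^3|\Sigma)_{\lambda\cup\lambda_T}$, exactly as the sentence preceding the corollary suggests.

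First I would isotope $\Sigma\subset Y$ off the connected-sum ball, so that it makes sense as a closed, oriented surface of genus at least two in $Y\#T^3$ that is disjoint from the $T^3$ summand. Cutting $Y\#T^3$ open along this relocated copy of $\Sigma$ and regluing by a homeomorphism $\phi$ of $\Sigma$ modifies only the $Y$ part: the connect-sum region and the $T^3$ summand are untouched, so the result is $Y'\#T^3$, where $Y'$ is obtained from $Y$ by the same cut-and-reglue operation, and the multicurve becomes $\lambda'\cup\lambda_T$. Since $\Sigma$ is disjoint from $\lambda_T$, we have $\Sigma\cap(\lambda\cup\lambda_T)=\Sigma\cap\lambda$, so the hypothesis that $\phi$ preserve this intersection set is the same condition as in the corollary.

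Next I would verify the remaining hypothesis of Lemma~\ref{lem:cut}: that $\lambda\cup\lambda_T$ meets some positive-genus surface disjoint from $\Sigma$ in an odd number of points. The torus $T^2\times\{\pt\}\subset T^3\subset Y\#T^3$ serves this purpose — it has genus one, it lies inside the $T^3$ summand and hence is disjoint from $\Sigma$, and it meets the circle fiber $\lambda_T$ in exactly one point while missing $\lambda$ entirely, for a total odd intersection number. With both hypotheses in place, Lemma~\ref{lem:cut} yields
\[ I_*(Y\#T^3|\Sigma)_{\lambda\cup\lambda_T}\cong I_*(Y'\#T^3|\Sigma)_{\lambda'\cup\lambda_T}, \]
which is precisely the asserted isomorphism $\Is(Y,\lambda|\Sigma)\cong\Is(Y',\lambda'|\Sigma)$.

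I do not expect any real obstacle here; the corollary is essentially a reformulation of Lemma~\ref{lem:cut}. The only points that warrant a moment's care are confirming that the cut-and-reglue operation on $Y\#T^3$ along the pushed-off copy of $\Sigma$ genuinely produces $Y'\#T^3$ (rather than disturbing the connect-sum structure) and that the auxiliary "odd intersection" condition of Lemma~\ref{lem:cut} is supplied automatically by the torus fiber in the $T^3$ factor. Both become clear once $\Sigma$ has been isotoped away from the connected-sum ball.
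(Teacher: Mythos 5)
Your proof is correct and follows exactly the paper's route: the corollary is deduced from Lemma~\ref{lem:cut} applied to $(Y\#T^3,\lambda\cup\lambda_T)$, with the torus fiber $T^2\times\{\pt\}$ supplying the odd-intersection hypothesis, which is precisely the one-line justification the paper gives. Your extra care in checking that cutting and regluing along $\Sigma$ (pushed off the connected-sum region) yields $Y'\#T^3$ is a fine, if implicit in the paper, verification.
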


\begin{proof}[Proof of Theorem \ref{thm:nonzero}] Let $\alpha\subset A$ and  $\beta\subset B$ be as in hypothesis of the theorem. Let $S$ be any splice formed by gluing the   complements \[S = (A- N(\alpha)) \cup ({B- N(\beta)})\] via an orientation-reversing homeomorphism of their boundaries which identifies the Seifert longitudes of these knots. Let $\Sigma$ be the closed surface in $S$ of genus at least two obtained as the union of minimal genus Seifert surfaces $\Sigma_\alpha \subset A-N(\alpha)$ and $\Sigma_\beta \subset B-N(\beta)$ for $\alpha$ and $\beta$. 
Since $A$ and $B$ are homology spheres, \[H_1(S;\Z/2\Z)\cong \Z/2\Z,\] generated by the meridian $\mu$ of $\alpha$, say. Then $\lambda = 0$ or $\mu$. We will first show that \begin{equation}\label{eqn:0mu}\Is(S,0|\Sigma)\cong \Is(S,\mu|\Sigma).\end{equation} Then we will show by an excision argument that \begin{equation}\label{eqn:isosplice}\dim\Is(S,\mu|\Sigma) = 2\cdot\dim\KHI(A,\alpha,g(\alpha))\cdot \dim\KHI(B,\beta,g(\beta)).\end{equation} Theorem \ref{thm:nonzero} will then follow immediately from \eqref{eqn:isosplice} together with  \eqref{eqn:symm} and Corollary \ref{cor:nonzero}.

Let $A(\Sigma_\alpha)$ and $B(\Sigma_\beta)$ be the standard  sutured  complements of the Seifert surfaces $\Sigma_\alpha\subset A$ and $\Sigma_\beta\subset B$. Each has vertical boundary  an annular neighborhood of the suture,  \[\partial_v A(\Sigma_\alpha) \cong \partial_v B(\Sigma_\beta) \cong (\partial \Sigma_\alpha = \partial \Sigma_\beta)\times [0,1],\] while their horizontal boundaries are given by
 \begin{align*}
\partial_h A(\Sigma_\alpha) &= \Sigma_\alpha\sqcup\Sigma_\alpha\\
\partial_h B(\Sigma_\beta) &= \Sigma_\beta\sqcup\Sigma_\beta.
\end{align*} 
The manifold obtained by cutting $S$ open along $\Sigma$ can alternatively be viewed as the result \[\bar S = A(\Sigma_\alpha)\cup B(\Sigma_\beta)\]  of  gluing $A(\Sigma_\alpha)$ and $B(\Sigma_\beta)$  along their vertical boundaries. We can view \[\partial \bar S= \Sigma_{1}\sqcup \Sigma_{0},\] as the disjoint union of  two identical copies of $\Sigma$, so that $S$ is obtained from $\bar S$ by gluing $\Sigma_{1}$ to $\Sigma_{0}$ by the identity. 

There is another convenient way to think about $\bar S$ and $S$.
Consider the manifolds \begin{align*}
A' &= A(\Sigma_\alpha) \cup (\Sigma_\beta\times[1/2,1])\\
B' &= B(\Sigma_\beta) \cup (\Sigma_\alpha\times[0,1/2]),
\end{align*} obtained by gluing their constituent pieces by homeomorphisms 
\begin{align*}
\partial\Sigma_\beta\times[1/2,1]&\to \partial_vA(\Sigma_\alpha)\\
\partial\Sigma_\alpha\times[0,1/2]&\to \partial_vB(\Sigma_\beta).
\end{align*} The boundaries of $A'$ and $B'$ can each be viewed as  consisting of two identical copies of $\Sigma$, \begin{align*}
\partial A'&=\Sigma_1 \sqcup \Sigma_{1/2}\\
\partial B'&= \Sigma_0 \sqcup \Sigma_{1/2},
\end{align*}
so that $\bar S$ is obtained by gluing $A'$ to $B'$ according to the identity  on $\Sigma_{1/2}$, and $S$ is obtained by additionally gluing $\Sigma_1$ to $\Sigma_0$ by the identity, as indicated in Figure \ref{fig:splitting}.

\begin{figure}[ht]
\labellist
\tiny \hair 2pt
\pinlabel $A(\Sigma_{\alpha})$ at 37 48
\pinlabel $B(\Sigma_{\beta})$ at 78 48
\pinlabel $A(\Sigma_{\alpha})$ at 170 57
\pinlabel $\Sigma_\beta\times I$ at 210 57
\pinlabel $B(\Sigma_\beta)$ at 210 37
\pinlabel $\Sigma_\alpha\times I$ at 170 37
\pinlabel $\Sigma_{1/2}$ at 244 47
\pinlabel $\Sigma_1$ at 239 68
\pinlabel $\Sigma_0$ at 239 27.5

\pinlabel $\Sigma_1$ at 107.5 68
\pinlabel $\Sigma_0$ at 107.5 27.5

\pinlabel $A(\Sigma_{\alpha})$ at 301 70
\pinlabel $\Sigma_\beta\times I$ at 342 70
\pinlabel $B(\Sigma_\beta)$ at 342 25
\pinlabel $\Sigma_\alpha\times I$ at 301 25

\pinlabel $\bar A$ at 322 92
\pinlabel $\bar B$ at 322 4

\pinlabel $S$ at 58 16
\pinlabel $S$ at 190 16

\pinlabel $\Sigma_{1/2}$ at 375 58
\pinlabel $\Sigma_{1/2}$ at 375 34.5
\pinlabel $\Sigma_1$ at 370 79.5
\pinlabel $\Sigma_0$ at 370 14.5

\pinlabel $\cong$ at 121 47

\endlabellist
\centering
\includegraphics[height=2.6cm]{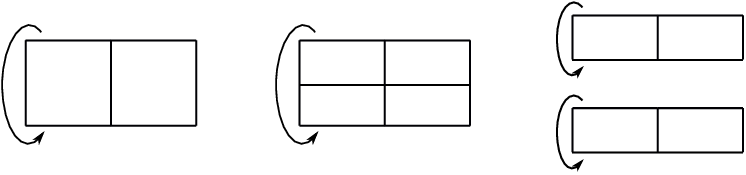}
\caption{Left, $S$ obtained from $A(\Sigma_\alpha)\cup B(\Sigma_\beta)$ via the identification $\Sigma_1\sim \Sigma_0$. Middle, $S$ obtained from $A'\cup B'$ by identifying $\Sigma_1\sim\Sigma_0$. Right, $\bar A\sqcup \bar B$ obtained from $A'\sqcup B'$ via identifications $\Sigma_1\sim\Sigma_{1/2}$ and $\Sigma_{1/2}\sim\Sigma_0$.} 
\label{fig:splitting}
\end{figure}

Pick essential curves $c_\alpha\subset \Sigma_\alpha$ and $c_\beta\subset \Sigma_\beta$, and let $\delta$ be a curve in $\Sigma_{\alpha}$ which intersects $c_\alpha$ transversely in one point. Since $A$ is a homology sphere and $\delta$ has linking number zero with $\alpha$, the curve $\delta$ is homologically trivial in $A - N(\alpha)$ and hence in $S$,
so \begin{equation}\label{eqn:iso1}\Is(S,0|\Sigma)\cong \Is(S,\delta|\Sigma).\end{equation} Let $S'$ be the  manifold obtained by gluing $A'$ to $B'$ by a homeomorphism $\partial A' \to \partial B'$ 
sending \begin{align*}c_\beta\times\{1\}&\mapsto c_\alpha\times\{0\}\\
c_\beta\times\{1/2\} &\mapsto c_\alpha\times\{1/2\},\end{align*}  so that the cylinders \begin{align*}
c_\beta\times[1/2,1]\subset A' \\
c_\alpha\times[0,1/2]\subset B'\end{align*} glue  to form a torus  $T\subset S'$ which intersects the curve $\delta$ in a single point. Note that  $S'$ can be obtained by cutting $S$  along $\Sigma_0\sim\Sigma_1$ and $\Sigma_{1/2}$ and regluing. Therefore, \begin{equation}\label{eqn:iso2}\Is(S,\delta|\Sigma)\cong \Is(S',\delta|\Sigma),\end{equation}  by Corollary \ref{cor:cut}.

Now consider the 3-torus
$T^3 = T^2\times S^1$ containing the curve \[\lambda = \pt\times S^1\subset T^3.\] Let $c\subset T^2$ be an essential curve, and $d\subset T^2\times\pt$ a curve intersecting $c\times\pt$ transversely in one point. There is a cobordism
\begin{equation}\label{eqn:excisioncob}(S',\delta)\sqcup (T^3, d\cup \lambda)\to (S',\delta\cup\lambda)\end{equation}
associated to cutting $(S',\delta)$ and $(T^3, d\cup \lambda)$ open along the tori \begin{align*}T&\subset S'\\
c\times S^1&\subset T^3
\end{align*} and regluing by a homeomorphism which connects these pieces, as illustrated in Figure \ref{fig:excision}. By excision \cite[Theorem 7.7]{km-excision} and the computation $I_*(T^3|T^2)_{d\cup\lambda} \cong \Q$ of \cite[Proposition~7.8]{km-excision},
this cobordism induces an isomorphism
\begin{equation}\label{eqn:iso3}
\Is(S',\delta|\Sigma) \cong \Is(S',\delta|\Sigma)\otimes I_*(T^3|T^2)_{d\cup\lambda}\to \Is(S',\delta\cup \lambda|\Sigma).\end{equation}
Since $\lambda$ intersects $\Sigma$ in one point, another application of Corollary \ref{cor:cut} tells us that
\begin{equation}\label{eqn:iso4}\Is(S',\delta\cup \lambda|\Sigma)\cong \Is(S,\delta\cup \lambda|\Sigma)\cong\Is(S,\delta\cup \mu|\Sigma),\end{equation} where the second isomorphism comes from the fact that $\lambda$ and $\mu$ both represent the generator of $H_1(S;\Z/2\Z)\cong \Z/2\Z$.
Finally, \begin{equation}
\label{eqn:iso5}
\Is(S,\delta\cup \mu|\Sigma)\cong \Is(S, \mu|\Sigma),
\end{equation}  since $\delta$ is nullhomologous in $S$. Combining  \eqref{eqn:iso1}-\eqref{eqn:iso5} proves the isomorphism \eqref{eqn:0mu}.

\begin{figure}[ht]
\labellist
\tiny \hair 2pt
\pinlabel $S'$ at 57 9
\pinlabel $T^3$ at 185 9
\pinlabel $S'$ at 350 9
\pinlabel $c_\alpha$ at 95 97
\pinlabel $c$ at 148 97
\pinlabel $\delta$ at 59 115
\pinlabel $d$ at 183 115
\pinlabel $\lambda$ at 223 88
\pinlabel $\lambda$ at 390 88
\pinlabel $\delta$ at 350 115

\endlabellist
\centering
\includegraphics[height=2.5cm]{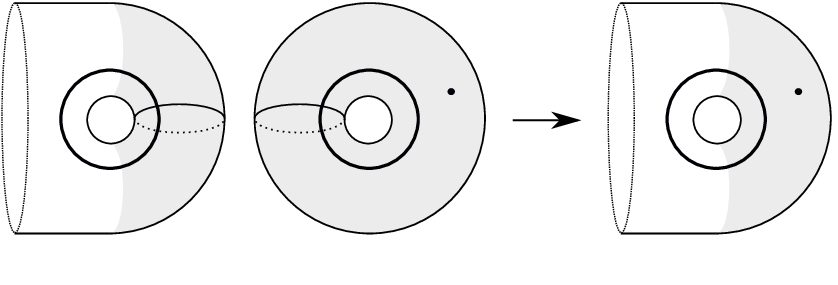}
\caption{A schematic of the cobordism \eqref{eqn:excisioncob}. The cutting and regluing occurs in the product of the gray region with $S^1$. The torus $T\subset S'$ is swept out by $c_\alpha$, and $\lambda$ is swept out by the points indicated on these surfaces.} 
\label{fig:excision}
\end{figure}

To prove \eqref{eqn:isosplice}, we form $S$ by gluing $A'$ to $B'$ as described above.  There is a cobordism \[(S,\mu)\to (\bar A, \mu_A)\sqcup (\bar B, \mu_B),\]
associated to cutting $S$ open along $\Sigma_{1/2}$ and $\Sigma_1\sim\Sigma_0$ and regluing, where\begin{align*} \bar A& = A'/(\Sigma_1\sim\Sigma_{1/2})\\
\bar B& = B'/(\Sigma_0\sim\Sigma_{1/2})
\end{align*} are the closed 3-manifolds obtained from each of $A'$ and $B'$ by gluing their boundary components together by the identity, as  in Figure \ref{fig:splitting}, and $\mu_\alpha\subset \bar A$ and $\mu_\beta\subset \bar B$  intersect  $\Sigma\subset \bar A$ and $\Sigma\subset \bar B$ in one point, respectively. By excision, this cobordism induces an isomorphism \[\Is(S,\mu|\Sigma) = I_*(S\# T^3|\Sigma)_{\mu\cup\lambda_T}\to I_*(\bar A\#T^3|\Sigma)_{\mu_\alpha\cup \lambda_T} \otimes I_*(\bar B|\Sigma)_{\mu_\beta},\] but the tensor product on the right is, by definition \cite[\S7]{km-excision}, precisely \[\SHI(A(\Sigma_\alpha)(1))\otimes\SHI(B(\Sigma_\beta)),\] which, by Corollary \ref{cor:hole}, has dimension
\begin{multline*}
\quad 2\cdot \dim \SHI(A(\Sigma_\alpha))\cdot \dim\SHI(B(\Sigma_\beta))\\
= 2\cdot\dim\KHI(A,\alpha,g(\alpha))\cdot\dim \KHI(B,\beta,g(\beta)), \quad
\end{multline*}
as desired.
\end{proof}

\begin{remark}
One can prove the Heegaard Floer homology analogue of Theorem \ref{thm:nonzero} by a couple of different arguments. One can, for instance, simply mimic the argument above, using Lekili's  excision result for Heegaard Floer homology in \cite{lekili-thesis}. 
\end{remark}

\bibliographystyle{alpha}
\bibliography{References}

\begin{thebibliography}{ABDS20}

\bibitem[ABDS20]{abds}
Antonio Alfieri, John~A. Baldwin, Irving Dai, and Steven Sivek.
\newblock Instanton {F}loer homology of almost-rational plumbings.
\newblock arXiv:2010.03800, 2020.

\bibitem[BS16]{bs-instanton}
John~A. Baldwin and Steven Sivek.
\newblock Instanton {F}loer homology and contact structures.
\newblock {\em Selecta Math. (N.S.)}, 22(2):939--978, 2016.

\bibitem[BS18]{bs-stein}
John Baldwin and Steven Sivek.
\newblock Stein fillings and {SU}(2) representations.
\newblock {\em Geom. Topol.}, 22(7):4307--4380, 2018.

\bibitem[BS21]{bs-concordance}
John~A. Baldwin and Steven Sivek.
\newblock Framed instanton homology and concordance.
\newblock {\em J. Topol.}, 14(4):1113--1175, 2021.

\bibitem[BS22a]{bs-concordance2}
John~A. Baldwin and Steven Sivek.
\newblock Framed instanton homology and concordance, {II}.
\newblock Forthcoming, 2022.

\bibitem[BS22b]{bs-trefoil}
John~A. Baldwin and Steven Sivek.
\newblock Khovanov homology detects the trefoils.
\newblock {\em Duke Math. J.}, 171(4):885--956, 2022.

\bibitem[CS83]{cs-splittings}
Marc Culler and Peter~B. Shalen.
\newblock Varieties of group representations and splittings of {$3$}-manifolds.
\newblock {\em Ann. of Math. (2)}, 117(1):109--146, 1983.

\bibitem[Eft15]{eaman}
Eaman Eftekhary.
\newblock Floer homology and splicing knot complements.
\newblock {\em Algebr. Geom. Topol.}, 15(6):3155--3213, 2015.

\bibitem[Eft20]{eaman2}
Eaman Eftekhary.
\newblock Correction to the article {F}loer homology and splicing knot
  complements.
\newblock {\em Algebr. Geom. Topol.}, 20(6):3205--3218, 2020.

\bibitem[Flo90]{floer-surgery}
Andreas Floer.
\newblock Instanton homology, surgery, and knots.
\newblock In {\em Geometry of low-dimensional manifolds, 1 ({D}urham, 1989)},
  volume 150 of {\em London Math. Soc. Lecture Note Ser.}, pages 97--114.
  Cambridge Univ. Press, Cambridge, 1990.

\bibitem[FS90]{fs-seifert}
Ronald Fintushel and Ronald~J. Stern.
\newblock Instanton homology of {S}eifert fibred homology three spheres.
\newblock {\em Proc. London Math. Soc. (3)}, 61(1):109--137, 1990.

\bibitem[HL16]{hedden-levine}
Matthew Hedden and Adam~Simon Levine.
\newblock Splicing knot complements and bordered {F}loer homology.
\newblock {\em J. Reine Angew. Math.}, 720:129--154, 2016.

\bibitem[HRW17]{hrw}
Jonathan Hanselman, Jacob Rasmussen, and Liam Watson.
\newblock {Bordered Floer homology for manifolds with torus boundary via
  immersed curves}.
\newblock arXiv:1604.03466, 2017.

\bibitem[Kir97]{kirby-list}
Problems in low-dimensional topology.
\newblock In Rob Kirby, editor, {\em Geometric topology ({A}thens, {GA},
  1993)}, volume~2 of {\em AMS/IP Stud. Adv. Math.}, pages 35--473. Amer. Math.
  Soc., Providence, RI, 1997.

\bibitem[KLT21]{klt}
\c{C}a\u{g}ri Karakurt, Tye Lidman, and Eamonn Tweedy.
\newblock Heegaard {F}loer homology and splicing homology spheres.
\newblock {\em Math. Res. Lett.}, 28(1):93--106, 2021.

\bibitem[KM04]{km-p}
P.~B. Kronheimer and T.~S. Mrowka.
\newblock Witten's conjecture and property {P}.
\newblock {\em Geom. Topol.}, 8:295--310, 2004.

\bibitem[KM10]{km-excision}
Peter Kronheimer and Tomasz Mrowka.
\newblock Knots, sutures, and excision.
\newblock {\em J. Differential Geom.}, 84(2):301--364, 2010.

\bibitem[KM11a]{km-unknot}
P.~B. Kronheimer and T.~S. Mrowka.
\newblock Khovanov homology is an unknot-detector.
\newblock {\em Publ. Math. Inst. Hautes \'Etudes Sci.}, (113):97--208, 2011.

\bibitem[KM11b]{km-yaft}
P.~B. Kronheimer and T.~S. Mrowka.
\newblock Knot homology groups from instantons.
\newblock {\em J. Topol.}, 4(4):835--918, 2011.

\bibitem[Lek09]{lekili-thesis}
Yank{\i} Lekili.
\newblock {\em {Broken Lefschetz fibrations, Lagrangian matching invariants and
  Ozsv{\'a}th-Szab{\'o} invariants}}.
\newblock Massachusetts Institute of Technology, 2009.
\newblock Thesis (Ph.D.)--Massachusetts Institute of Technology.

\bibitem[LPCZ21]{lpcz}
Tye Lidman, Juanita Pinz{\'o}n-Caicedo, and Raphael Zentner.
\newblock {Toroidal homology spheres and $SU(2)$-representations}.
\newblock arXiv:2102.02621, 2021.

\bibitem[Sav02]{saveliev}
Nikolai Saveliev.
\newblock {\em Invariants for homology {$3$}-spheres}, volume 140 of {\em
  Encyclopaedia of Mathematical Sciences}.
\newblock Springer-Verlag, Berlin, 2002.
\newblock Low-Dimensional Topology, I.

\bibitem[Sca15]{scaduto}
Christopher~W. Scaduto.
\newblock Instantons and odd {K}hovanov homology.
\newblock {\em J. Topol.}, 8(3):744--810, 2015.

\bibitem[SZ19]{sz-menagerie}
Steven Sivek and Raphael Zentner.
\newblock A menagerie of {$SU(2)$}-cyclic 3-manifolds.
\newblock arXiv:1910.13270, 2019.

\bibitem[Zen18]{zentner}
Raphael Zentner.
\newblock Integer homology 3-spheres admit irreducible representations in
  {${\rm SL}(2,\Bbb{C})$}.
\newblock {\em Duke Math. J.}, 167(9):1643--1712, 2018.

\end{thebibliography}

\end{document}